\newtheorem{theorem}{Theorem}[section]
\newtheorem{lemma}[theorem]{Lemma}
\newtheorem{corollary}[theorem]{Corollary}
\theoremstyle{definition}
\theoremstyle{remark}
\numberwithin{equation}{section}
\newcommand{\mmod}[1]{\,\,(\text{mod}\,\,#1)}
\def\bfk{{\mathbf k}}
\def\dbN{{\mathbb N}}
\def\dbR{{\mathbb R}}
\def\dbZ{{\mathbb Z}}
\def\grJ{{\mathfrak J}}
\def\grk{{\mathfrak k}}
\def\grm{{\mathfrak m}}\def\grM{{\mathfrak M}}\def\grN{{\mathfrak N}}
\def\grS{{\mathfrak S}}
\def\grK{{\mathfrak K}}
\def\alp{{\alpha}} 
\def\bet{{\beta}}  
 \def\Gam{{\Gamma}}
\def\del{{\delta}} \def\Del{{\Delta}}
\def\tet{{\theta}}  \def\Tet{{\Theta}}
\def\kap{{\kappa}}
\def\lam{{\lambda}}
\def\Ups{{\Upsilon}} 
\def\ome{{\omega}} \def\Ome{{\Omega}} 
\def\d{{\partial}}
\def\eps{\varepsilon}
\def\le{\leqslant} \def\ge{\geqslant}
\def\d{{\,{\rm d}}}
\begin{document}
\title[Waring's problem]{On Waring's problem:\\ beyond Fre\u \i man's theorem}
\author[J\"org Br\"udern]{J\"org Br\"udern}
\address{Mathematisches Institut, Bunsenstrasse 3--5, D-37073 G\"ottingen, Germany}
\email{jbruede@gwdg.de}
\author[Trevor D. Wooley]{Trevor D. Wooley}
\address{Department of Mathematics, Purdue University, 150 N. University Street, West 
Lafayette, IN 47907-2067, USA}
\email{twooley@purdue.edu}
\subjclass[2020]{11P05, 11P55}
\keywords{Waring's problem, Fre\u \i man's theorem, Hardy-Littlewood method.}
\thanks{First author supported by Deutsche Forschungsgemeinschaft Project Number 
255083470. Second author supported by NSF grants DMS-1854398 and DMS-2001549.}
\date{}

\begin{abstract} Let $k_i\in \dbN$ $(i\ge 1)$ satisfy $2\le k_1\le k_2\le \ldots $. 
Fre\u \i man's theorem shows that when $j\in \dbN$, there exists $s=s(j)\in \dbN$ such 
that all large integers $n$ are represented in the form 
$n=x_1^{k_j}+x_2^{k_{j+1}}+\ldots +x_s^{k_{j+s-1}}$, with $x_i\in \dbN$, if and only 
if $\sum k_i^{-1}$ diverges. We make this theorem effective by showing that, for each 
fixed $j$, it suffices to impose the condition
\[
\sum_{i=j}^\infty k_i^{-1}\ge 2\log k_j +4.71.
\]
More is established when the sequence of exponents forms an arithmetic progression. Thus, 
for example, when $k\in \dbN$ and $s\ge 100(k+1)^2$, all large integers $n$ are 
represented in the form $n=x_1^k+x_2^{k+1}+\ldots +x_s^{k+s-1}$, with $x_i\in \dbN$. 
\end{abstract}
\maketitle

\section{Introduction} Recent advances in the smooth number technology associated with 
Waring's problem (see \cite{BW2022}) make possible an investigation of the cognate 
problem to which Fre\u \i man's theorem provides a qualitative answer. Consider then 
natural numbers $k_i$ $(i\ge 1)$ satisfying $2\le k_1\le k_2\le \ldots $. We address the 
problem of determining circumstances in which, given $j\in \dbN$, there exists a natural 
number $s=s(j)$ such that all large integers $n$ are represented in the form
\[
x_1^{k_j}+x_2^{k_{j+1}}+\ldots +x_s^{k_{j+s-1}}=n,
\]
with $x_i\in \dbN$ $(1\le i\le s)$. Fre\u \i man's theorem, announced in 1949 (see 
\cite{Fre1949}), asserts that such holds if and only if the infinite series $\sum k_i^{-1}$ 
diverges. A formal proof of this conclusion was given by Scourfield in 1960 
(see \cite[Theorem 1]{Sco1960}). We now provide an effective version of this conclusion.

\begin{theorem}\label{theorem1.1}
Let $k_i\in \dbN$ $(i\ge 1)$ satisfy $2\le k_1\le k_2\le \ldots $. Suppose that $s$ is a 
natural number for which
\[
\sum_{i=3}^s\frac{1}{k_i}>2\log k_1+\frac{1}{k_2}+3.20032.
\]
Then all sufficiently large natural numbers $n$ are represented in the form 
\[
x_1^{k_1}+x_2^{k_2}+\ldots +x_s^{k_s}=n,
\]
with $x_i\in \dbN$ $(1\le i\le s)$.
\end{theorem}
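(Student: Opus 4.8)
The plan is to establish Theorem~\ref{theorem1.1} by the Hardy--Littlewood circle method, drawing the firepower for the minor arc analysis from the smooth number estimates of \cite{BW2022}. Fix a large natural number $n$. For $1\le i\le s$ choose $P_i$ comparable to $n^{1/k_i}$, together with dyadic ranges for the $x_i$, arranged so that $n$ lies comfortably in the interior of the set of attainable values of $x_1^{k_1}+\ldots+x_s^{k_s}$. For a small auxiliary parameter $\eta>0$ let $\calA(P,R)$ denote the set of integers in $[1,P]$ all of whose prime divisors are at most $R$. Put $f_i(\alp)=\sum_{x}e(\alp x^{k_i})$ over the chosen range for the low-degree variables $i=1,2$, and $\ftil_i(\alp)=\sum_{x\in\calA(P_i,P_i^\eta)}e(\alp x^{k_i})$ for $3\le i\le s$. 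Counting only those representations $x_1^{k_1}+\ldots+x_s^{k_s}=n$ with $x_1,x_2$ in the chosen dyadic ranges and $x_3,\ldots,x_s$ smooth bounds the total number $R(n)$ of representations from below by
\[
\int_0^1 f_1(\alp)f_2(\alp)\prod_{i=3}^s\ftil_i(\alp)\,e(-n\alp)\,\d\alp ,
\]
and it suffices to prove this is $\gg n^{\sig-1}$, where $\sig=\sum_{i=1}^s k_i^{-1}$.

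On a set of major arcs $\grM$ centred at rationals with small denominator, one follows the classical route to obtain $\grS(n)\grJ(n)$ up to an acceptable error. The singular integral $\grJ(n)$ has size $n^{\sig-1}$, since the hypothesis forces $\sig>1$ and smooth numbers possess the expected density in dyadic intervals. The singular series $\grS(n)=\prod_p\grT_p(n)$ requires a lower bound uniform in $n$: for primes exceeding a certain absolute bound this follows from Hensel's lemma together with the abundance of variables, while for the finitely many remaining primes one verifies that $x_1^{k_1}+\ldots+x_s^{k_s}\equiv n\ (\mathrm{mod}\ p^h)$ possesses a nonsingular solution for every $n$ and every $h\ge 1$; here one uses that the hypothesis is comfortably strong enough to defeat the $2$-adic (and other small-prime) obstructions present when some $k_i$ are even, and that the restriction of $x_3,\ldots,x_s$ to smooth numbers is harmless because $\calA(P,R)$ is equidistributed modulo any fixed $q$.

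The heart of the matter is the minor arc bound
\[
\int_\grm \Bigl| f_1(\alp)f_2(\alp)\prod_{i=3}^s\ftil_i(\alp)\Bigr|\,\d\alp = o\bigl(n^{\sig-1}\bigr).
\]
Here one extracts a pointwise gain from a single classical Weyl sum of degree $k_1$ --- running the Weyl differencing of the smooth number method, which reduces the effective degree in roughly $\log k_1$ stages and is responsible for the term $2\log k_1$ --- retains $f_2$ through its mean square $\int_0^1|f_2(\alp)|^2\,\d\alp\ll P_2$, accounting for the term $k_2^{-1}$, and disposes of the product $\prod_{i=3}^s\ftil_i$ by Hölder's inequality in conjunction with the fundamental mean value estimates of \cite{BW2022} for smooth Weyl sums of each degree $k_i$; each such variable contributes its full density $k_i^{-1}$ to the available budget, which is why the relevant quantity is $\sum_{i=3}^s k_i^{-1}$. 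Balancing all ingredients and optimising over the free parameters --- the dissection level of the major arcs, the smoothness parameter $\eta$, and the allocation of the exponents in Hölder's inequality --- reduces the desired inequality to the stated hypothesis, with the absolute constant $3.20032$ emerging from this optimisation. A standard pruning argument then upgrades the $n^\eps$ that Hölder alone supplies into a genuine power saving, and allows one to discard all but finitely many variables so that only the largest exponents are binding.

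The principal obstacle is precisely this minor arc estimate: one must deploy the smooth Weyl sum mean values of \cite{BW2022} at the extreme of their range and interlock them, through a well-chosen Hölder splitting, with a pointwise bound for the degree-$k_1$ Weyl sum, so that the reciprocal-sum condition comes out as weak as the theorem claims while still beating the main term by a power of $n$ rather than merely matching it to within $n^\eps$. A secondary difficulty, routine in principle but genuine with mixed and partly even exponents, is the verification that the singular series is bounded away from zero for every sufficiently large $n$.
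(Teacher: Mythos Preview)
Your overall architecture is right—circle method with smooth Weyl sums, a pointwise Weyl-type bound for the degree-$k_1$ sum on the minor arcs, H\"older plus mean value estimates for the remaining generating functions, and a verification that $\grS(n)\gg 1$. But two of your attributions are off, and one of them conceals a genuine gap.

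First, the term $2\log k_1$ does not come from ``roughly $\log k_1$ stages of Weyl differencing''. It comes from the exponent in the Weyl-type bound for the smooth Weyl sum: the paper proves (Lemma~\ref{lemma3.4}, via a Sobolev--Gallagher argument together with the admissible exponents of \cite{BW2022}) that $\sup_{\alp\in\grm(Q)}|f_1(\alp)|\ll P_1Q^{-1/(Dk_1^2)}$ with $D=4.51396$. The $k_1^2$ in the denominator is what becomes $2\log k_1$ after logarithms, and the absolute constant is $1+\log(2D)=3.20032$.

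Second, and more seriously, handling $f_2$ through $\int_0^1|f_2(\alp)|^2\d\alp\ll P_2$ does not work and is not how the $1/k_2$ arises. That mean square is dominated by the diagonal, so compared with what the main term allows it loses a factor $n^{1-1/k_2}$; no H\"older allocation among $f_3,\ldots,f_s$ can recoup this. In the paper $f_2$ is not isolated at all: one sets $G_1(\alp)=f_2(\alp)\cdots f_s(\alp)$ and applies H\"older with exponents $t_i=k_i\Phi_1$, where $\Phi_1=\sum_{i=2}^s k_i^{-1}$ so that $\sum t_i^{-1}=1$. The essential input is the restricted mean value of Lemma~\ref{lemma3.2}, namely $\int_{\grN(Q)}|f_i(\alp)|^{t_i}\d\alp\ll P_i^{t_i-k_i}Q^{\eps+2\Del_{t_i}/k_i}$; this is where the ``pruning'' you refer to actually lives, and it needs the integral over $\grN(Q)$, not over $[0,1)$. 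The admissible-exponent bound $\Del_v<ke^{1-(v-2)/k}$ from \cite{Woo1993} is available only for even $v$, so each $t_i$ is rounded down by at most $2$, incurring a factor $e^{2/k_i}\le e^{2/k_2}$. That rounding is the true source of the $2/k_2$ on the right (equivalently the $1/k_2$ once one moves $1/k_2$ across the inequality). Combining the sup bound for $f_1$ with this mean value for $G_1$ over $\grN(Q)$ and summing dyadically in $Q$ gives the condition $e^{\Phi_1}>2De^{1+2/k_2}k_1^2$, which is exactly the hypothesis of the theorem.
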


Since the hypotheses of Theorem \ref{theorem1.1} impose the condition 
$k_2\ge k_1\ge 2$, one obtains an immediate consequence of this theorem that implies 
Fre\u \i man's theorem.

\begin{corollary}\label{corollary1.2}
Let $k_i\in \dbN$ $(i\ge 1)$ satisfy $2\le k_1\le k_2\le \ldots $, and suppose that 
$j\in \dbN$. Then whenever $s$ is a natural number for which
\begin{equation}\label{1.1}
\sum_{i=j}^{j+s-1}\frac{1}{k_i}\ge 2\log k_j+4.71,
\end{equation}
all sufficiently large natural numbers $n$ are represented in the form 
\[
x_1^{k_j}+x_2^{k_{j+1}}+\ldots +x_s^{k_{j+s-1}}=n,
\]
with $x_i\in \dbN$ $(1\le i\le s)$.
\end{corollary}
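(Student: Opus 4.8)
The plan is to obtain Corollary~\ref{corollary1.2} from Theorem~\ref{theorem1.1} by relabelling the sequence of exponents and then checking that the hypothesis \eqref{1.1} is strong enough to meet the requirement of the theorem with a little room to spare. First I would put $K_i=k_{j+i-1}$ for $i\ge 1$. The assumption $2\le k_1\le k_2\le\ldots$ gives $2\le K_1\le K_2\le\ldots$, and the representation problem in the corollary is exactly that of writing $n=y_1^{K_1}+y_2^{K_2}+\ldots+y_s^{K_s}$ with $y_i\in\dbN$, which is the setting of Theorem~\ref{theorem1.1} applied to the sequence $(K_i)_{i\ge1}$.

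Next I would reduce \eqref{1.1} to the hypothesis of Theorem~\ref{theorem1.1} for the sequence $(K_i)$. That hypothesis reads
\[
\sum_{i=3}^s\frac{1}{K_i}>2\log K_1+\frac{1}{K_2}+3.20032,
\]
and, writing $\sum_{i=3}^sK_i^{-1}=\sum_{i=1}^sK_i^{-1}-K_1^{-1}-K_2^{-1}$, it is equivalent to
\[
\sum_{i=1}^s\frac{1}{K_i}>2\log K_1+\frac{1}{K_1}+\frac{2}{K_2}+3.20032.
\]
Since $K_1,K_2\ge2$ one has $K_1^{-1}+2K_2^{-1}\le\tfrac12+1=\tfrac32$, so this follows once
\[
\sum_{i=1}^s\frac{1}{K_i}>2\log K_1+4.70032.
\]
Finally, $\sum_{i=1}^sK_i^{-1}=\sum_{i=j}^{j+s-1}k_i^{-1}$ and $K_1=k_j$, so \eqref{1.1} yields $\sum_{i=1}^sK_i^{-1}\ge2\log K_1+4.71>2\log K_1+4.70032$, and an appeal to Theorem~\ref{theorem1.1} completes the argument. (One should also note in passing that \eqref{1.1} forces $s$ to be large enough for the sum $\sum_{i=3}^s$ to make sense, since each term is at most $\tfrac12$ while the right-hand side of \eqref{1.1} exceeds $6$; this is automatic and needs no comment in the write-up.)

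I do not anticipate any genuine obstacle here: the deduction is pure bookkeeping, and the only point to watch is that the constant $4.71$ leaves enough slack above $3.20032$ to absorb the two discarded terms $K_1^{-1}$ and $2K_2^{-1}$ in their worst case $K_1=K_2=2$. Since $4.71-3.20032=1.50968>\tfrac32$, this holds comfortably; indeed the corollary would survive with $4.71$ replaced by any constant exceeding $4.70032$, the stated value being a convenient round choice.
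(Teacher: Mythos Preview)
Your deduction is correct and matches exactly what the paper intends: the corollary is stated as ``an immediate consequence'' of Theorem~\ref{theorem1.1}, the only input being that $k_{j+1}\ge k_j\ge 2$, and you have supplied precisely the arithmetic ($3.20032+\tfrac{1}{K_1}+\tfrac{2}{K_2}\le 3.20032+\tfrac32<4.71$) that justifies this. No further comment is needed.
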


By making better use of sharper Weyl exponents available for smaller exponents, most 
particularly in the situation in which one or more of the $k_i$ are equal to $2$, it would not 
be difficult to reduce the number $4.71$ occurring in the lower bound \eqref{1.1} of the 
statement of Corollary \ref{corollary1.2}. Back of the envelope computations suggest that a 
number comfortably below $3.5$ should be accessible. For larger values of $k_1$, and 
$k_2$ large compared to $k_1$, on the other hand, the conclusion of Theorem 
\ref{theorem1.1} has strength reflecting the limits of current technology. Standard 
heuristics from the circle method, meanwhile, suggest that the conclusion of Corollary 
\ref{corollary1.2} should remain valid provided only that
\[
\sum_{i=j}^{j+s-1}\frac{1}{k_i}>4.
\]
If one is prepared to accept a local solubility condition, then the assumption of 
square-root cancellation for the mean values of exponential sums encountered in the 
application of the circle method would reduce the lower bound $4$ here to $2$, while the 
most optimistic heuristics would reduce this number further to $1$.\par   

We now turn to the special case of this variant of Waring's problem involving mixed powers 
in which the exponents consist of consecutive terms of an arithmetic progression. Thus, 
when $k$ and $r$ are non-negative integers with $k\ge 2$, we consider the representation 
of large positive integers $n$ in the shape
\begin{equation}\label{1.2}
x_1^k+x_2^{k+r}+\ldots +x_s^{k+r(s-1)}=n,
\end{equation}
with $x_i\in \dbN$ $(1\le i\le s)$. We denote by $R(k,r)$ the least number $s$ having the 
property that all large integers $n$ are represented in the form \eqref{1.2}. In particular, 
the important number $G(k)$ familiar to aficionados of Waring's problem is equal to 
$R(k,0)$. Moreover, the pioneering work of Roth \cite[Theorem 2]{Rot1951} shows that 
$R(2,1)\le 50$, which is to say that all large enough integers $n$ have a representation in 
the shape
\[
n=x_1^2+x_2^3+\ldots +x_{50}^{51},
\]
with $x_i\in \dbN$ $(1\le i\le 50)$.

\begin{theorem}\label{theorem1.3}
Let $k$ and $r$ be natural numbers with $k\ge 2$. Then, uniformly in $k$ and $r$ one has 
$R(k,r)\le A(r)(k+1)^{r+1}$, where $A(r)=r^{-1}25^r(r+1)^{r+1}$. Meanwhile, when 
$r\ge k$ one has $R(k,r)\le (6k+6)^{2r}$.
\end{theorem}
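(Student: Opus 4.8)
The plan is to deduce Theorem \ref{theorem1.3} from Theorem \ref{theorem1.1} by choosing the exponent sequence appropriately and estimating the relevant harmonic sum. For the first bound, set $k_i = k + r(i-1)$ for $1 \le i \le s$ with $s = \lceil A(r)(k+1)^{r+1}\rceil$; then the representation \eqref{1.2} matches the shape in Theorem \ref{theorem1.1}. It suffices to verify that
\[
\sum_{i=3}^s \frac{1}{k + r(i-1)} > 2\log k + \frac{1}{k+r} + 3.20032.
\]
The left-hand side is a sum of reciprocals of an arithmetic progression with common difference $r$, so it is comparable to $r^{-1}\log s$; more precisely, comparing with an integral, $\sum_{i=3}^s (k+r(i-1))^{-1} \ge r^{-1}\log\!\bigl((k+r(s-1))/(k+2r)\bigr) - O(1)$. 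With $s$ of size roughly $25^r(r+1)^{r+1}r^{-1}(k+1)^{r+1}$, one has $k + r(s-1) \gg 25^r (r+1)^{r+1}(k+1)^{r+1}$, so $r^{-1}\log(k+rs)$ is at least about $r^{-1}\bigl((r+1)\log(k+1) + (r+1)\log(r+1) + r\log 25\bigr)$, which for $r \ge 1$ and $k \ge 2$ comfortably exceeds $2\log k + 4$. The constant $25$ (rather than something smaller like $e^2$) is presumably chosen to absorb all the lower-order losses — the initial terms $i=1,2$ removed from the sum, the discrepancy between the sum and the integral, and the additive constant $3.20032$ — uniformly in both $k$ and $r$, so the main task is a careful but routine bookkeeping of these error terms.

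For the second bound, valid when $r \ge k$, the point is that the exponents grow so rapidly that Theorem \ref{theorem1.1} applied directly would be wasteful; instead one should exploit that a single large power $x^{k+r(i-1)}$ already ranges over a set dense enough on its own scale. The natural approach is to group the variables: partition $\{1, \ldots, s\}$ into blocks and, within each block, apply Theorem \ref{theorem1.1} (or rather a localized version of the underlying smooth-number machinery from \cite{BW2022}) to represent a suitable range of integers, then combine blocks additively. Alternatively — and probably cleaner — one argues that with $s = (6k+6)^{2r}$ the partial sum $\sum_{i=1}^{s}(k+r(i-1))^{-1}$ still satisfies the hypothesis of Theorem \ref{theorem1.1} with $j=1$: here $k_1 = k$, so we need $\sum_{i=3}^s k_i^{-1} > 2\log k + k_2^{-1} + 3.20032$ where $k_2 = k+r$. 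Since $r^{-1}\log\bigl((k+rs)/(k+2r)\bigr) \approx r^{-1}\log(s) = r^{-1}\cdot 2r\log(6k+6) = 2\log(6k+6)$, which exceeds $2\log k + 4$ for all $k \ge 2$, the bound follows — the factor $6k+6$ and the exponent $2r$ give exactly the room needed after accounting for the $r^{-1}\log(k+2r)$ loss and the constant.

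The main obstacle I anticipate is not the analytic number theory — that is entirely encapsulated in Theorem \ref{theorem1.1} — but rather making the harmonic-sum estimates genuinely uniform in the regime where $r$ is large and $k$ is small (or vice versa), where the crude integral comparison $\sum (k+r(i-1))^{-1} = r^{-1}\log(s) + O(1)$ has an $O(1)$ that must be controlled explicitly with the correct sign. One must be careful that the removed terms $i = 1, 2$ and the Euler–Maclaurin correction do not eat into the slack; this is why the constants $A(r) = r^{-1}25^r(r+1)^{r+1}$ and $(6k+6)^{2r}$ are chosen with substantial margin rather than optimized. A secondary point requiring care is the boundary case $r = k$, where both bounds apply and one should check they are consistent — but since $(6k+6)^{2k}$ is enormously larger than $A(k)(k+1)^{k+1}$ for $k \ge 2$, the first bound is the operative one there and no conflict arises. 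Once the uniformity is handled, the proof is a direct appeal to Theorem \ref{theorem1.1}.
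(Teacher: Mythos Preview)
Your plan for the second bound ($r\ge k$) is essentially what the paper does: Corollary~\ref{corollary4.4} verifies the hypothesis of Lemma~\ref{lemma4.3} (which is the engine behind Theorem~\ref{theorem1.1}) under the assumption $s\ge (6k+6)^{2r}$, and the calculation you sketch is the right one.

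Your plan for the first bound, however, does not work. Theorem~\ref{theorem1.1} is simply too weak to give $R(k,r)\le A(r)(k+1)^{r+1}$ when $r$ is small compared with $k$. Take $r=1$ and $k=10$, so that $s=100(k+1)^2=12100$ and $k_i=9+i$. Then
\[
\sum_{i=3}^{s}\frac{1}{k_i}=\sum_{j=12}^{12109}\frac{1}{j}\approx \log\frac{12109}{12}\approx 6.92,
\]
whereas the right-hand side of the hypothesis in Theorem~\ref{theorem1.1} is $2\log 10+\tfrac{1}{11}+3.20032\approx 7.90$. The inequality fails, and in fact for $r=1$ your approach only succeeds when $k\le 4$. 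The issue is structural: the harmonic sum grows like $r^{-1}(r+1)\log(k+1)$, which for large $r$ is only about $\log k$, not the $2\log k$ that Theorem~\ref{theorem1.1} demands.

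The paper obtains the first bound by a genuinely different minor arc treatment (\S5). Rather than applying the Weyl-type estimate of Lemma~\ref{lemma3.4} only to $f_1(\alp)$, which saves $Q^{-1/(Dk_1^2)}=Q^{-1/(Dk^2)}$, it applies it to the whole product $F_2(\alp)=f_1(\alp)\cdots f_k(\alp)$, saving
\[
Q^{-\sum_{i=1}^k 1/(Dk_i^2)}\ll Q^{-1/(Dk(r+1))}
\]
(Lemma~\ref{lemma5.1}). The mean value over the remaining $f_{k+1},\ldots,f_s$ is handled as before (Lemma~\ref{lemma5.2}). This replaces the condition $\Phi>2\log k+O(1)$ implicit in Theorem~\ref{theorem1.1} by the weaker condition $\Phi>\log(k(r+1))+O(1)$, and it is precisely this gain that permits $s$ of size $A(r)(k+1)^{r+1}$ rather than the larger $(Ck^2)^r$ that a direct appeal to Theorem~\ref{theorem1.1} would require.
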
 

It would appear that the only previous work concerning this problem of such generality 
hitherto available in the literature is that due to Scourfield \cite[Theorem 2]{Sco1960}. The 
latter work shows that when $k\ge 12$, one has
\[
R(k,r)\le C(r)k^{4r+1}(\log k)^{2r},
\]
in which $C(r)$ is a quantity depending at most on $r$, but apparently growing somewhat 
more rapidly than $\exp (8r^2)$. Meanwhile, the early work of Roth \cite{Rot1951} 
showing that $R(2,1)\le 50$ has been improved by a sequence of authors over the past 
seven decades (see \cite{Bru1987, Bru1988, For1995, For1996, Tha1968, Tha1980, 
Tha1982, Tha1984, Vau1970, Vau1971}). Most recently, Liu and Zhao \cite{LZ2021} have 
shown that $R(2,1)\le 13$. As $r$ increases in equation \eqref{1.2}, the number of 
summands required to apply available technology increases rapidly. Thus, recent work of 
Kuan, Lesesvre and Xiao \cite[Theorem 2]{KLX2020} asserts that $R(2,2)\le 133$.\par

We isolate two cases of the representation problem \eqref{1.2} for special attention. First, 
in the case $r=1$, we note that Ford \cite[Theorems 2 and 3]{For1996} has shown that 
$R(3,1)\le 72$, and that for large values of $k$ one has $R(k,1)\ll k^2\log k$. A corollary of 
Theorem \ref{theorem1.3} improves the order of magnitude of the latter bound.

\begin{corollary}\label{corollary1.4}
When $k$ is an integer with $k\ge 2$, one has $R(k,1)\le 100(k+1)^2$.
\end{corollary}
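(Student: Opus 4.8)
The plan is simply to specialize Theorem \ref{theorem1.3} to the case $r=1$. For this choice one computes $A(1)=1^{-1}\cdot 25^1\cdot(1+1)^{1+1}=25\cdot 4=100$, while $(k+1)^{r+1}=(k+1)^2$; hence the first bound recorded in Theorem \ref{theorem1.3} yields $R(k,1)\le 100(k+1)^2$ for every integer $k\ge 2$, which is precisely the assertion to be proved. The second bound of Theorem \ref{theorem1.3}, pertaining to the range $r\ge k$, plays no role here, since $r=1<2\le k$ throughout, and so only the first estimate is relevant.

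Accordingly, there is no genuine obstacle to overcome: all the substance resides in the proof of Theorem \ref{theorem1.3}, and the corollary is merely a transcription of that result at $r=1$. One could reasonably wonder whether the constant $100$ might be sharpened by analysing the progression of exponents $k,k+1,\ldots,k+s-1$ directly rather than routing through the general apparatus; for small $k$ this is plainly possible, since $R(2,1)\le 13$ by the work of Liu and Zhao and $R(3,1)\le 72$ by that of Ford, both far stronger than what the corollary provides. However, for the uniform bound valid over all $k\ge 2$ — whose purpose is to strip the spurious logarithmic factor from the previously known estimate $R(k,1)\ll k^2\log k$ — no such refinement is needed, and the displayed inequality follows at once from Theorem \ref{theorem1.3}.
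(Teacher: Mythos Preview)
Your proof is correct and matches the paper's approach exactly: Corollary~\ref{corollary1.4} is stated in the paper as an immediate consequence of Theorem~\ref{theorem1.3}, obtained by setting $r=1$ and computing $A(1)=100$, with no separate argument given.
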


Thus, when $k\ge 2$ and $s\ge 100(k+1)^2$, all large integers $n$ possess a 
representation in the shape
\[
x_1^k+x_2^{k+1}+\ldots +x_s^{k+s-1}=n,
\]
with $x_i\in \dbN$ $(1\le i\le s)$. The cognate problem in which one seeks representations 
of large integers $n$ in the shape
\[
x_1^k+x_2^{2k}+\ldots +x_s^{sk}=n,
\]
with $x_i\in \dbN$ $(1\le i\le s)$, is considerably more difficult. Here, by taking $r=k$ in 
Theorem \ref{theorem1.3} we obtain the following conclusion.

\begin{corollary}\label{corollary1.5}
Let $k$ be an integer with $k\ge 2$. Then $R(k,k)\le (6k+6)^{2k}$.
\end{corollary}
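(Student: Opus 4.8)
The plan is to obtain this conclusion as an immediate specialisation of Theorem \ref{theorem1.3}. Taking $r=k$, the hypothesis $r\ge k$ demanded by the second assertion of Theorem \ref{theorem1.3} is satisfied (with equality), and that assertion then yields
\[
R(k,k)\le (6k+6)^{2r}=(6k+6)^{2k},
\]
which is exactly the bound claimed. So the work is complete as soon as Theorem \ref{theorem1.3} is in hand, and the deduction itself is a matter of reading off the special case.

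In other words, there is no genuine obstacle at this stage: the whole substance resides in the proof of Theorem \ref{theorem1.3}, and specifically in the estimate established there for the range $r\ge k$, which is precisely the most demanding regime for the circle-method machinery since the number of variables required grows exponentially in $r$. One might also remark that the first bound of Theorem \ref{theorem1.3}, namely $R(k,r)\le A(r)(k+1)^{r+1}$ with $A(k)=k^{-1}25^k(k+1)^{k+1}$, is in principle available in this diagonal case too; a short comparison of $k^{-1}25^k(k+1)^{2k+2}$ with $(6k+6)^{2k}$ shows the two to be of broadly comparable order, and so we simply record the cleaner form $(6k+6)^{2k}$, which suffices for the stated conclusion. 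The only observation asked of the reader is that the exponents $k,2k,3k,\ldots,sk$ appearing in the representation problem are consecutive terms of the arithmetic progression with common difference $r=k$, whence Theorem \ref{theorem1.3} applies directly.
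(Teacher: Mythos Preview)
Your deduction is correct and is exactly the paper's own argument: Corollary \ref{corollary1.5} is obtained by specialising $r=k$ in the second assertion of Theorem \ref{theorem1.3}. The additional commentary comparing with the first bound of Theorem \ref{theorem1.3} is extraneous but harmless.
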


For comparison, the aforementioned work of Scourfield \cite{Sco1960} would deliver a 
much weaker bound of the general shape $R(k,k)\ll \exp (ck^2)$ for a suitable $c>0$. It is 
worth remarking, however, that the heuristic arguments noted in the discussion following the 
statement of Corollary \ref{corollary1.2} suggest that one should have bounds of the shape 
$R(k,1)\ll k$ and $R(k,k)\ll e^k$.\par

Our proofs of Theorems \ref{theorem1.1} and \ref{theorem1.3} are based on applications 
of the Hardy-Littlewood method, and the basic infrastructure associated with this treatment 
is outlined in \S2. Then, in \S3 we prepare a novel Weyl-type estimate for exponential sums 
over smooth numbers. This eases our path in subsequent discussions and will likely be of 
independent interest. We combine this estimate with an upper bound for mean values of 
smooth Weyl sums in \S4, making use of our recent work \cite{BW2022} concerning 
Waring's problem. Thereby, we obtain an acceptable upper bound for appropriate sets of 
minor arcs relevant to Theorem \ref{theorem1.1} and the second conclusion of Theorem 
\ref{theorem1.3}. A refinement of this approach in \S5 applies for the minor arc 
contribution needed for the proof of the first conclusion of Theorem \ref{theorem1.3}. The 
corresponding major arc contributions are discussed in \S6, the positivity of the singular 
series requiring some additional discussion in \S7.\par

In this paper the letter $p$ is reserved to denote a prime number. We use the standard 
notation $p^h\|n$ to indicate that $p^h|n$ and $p^{h+1}\nmid n$. Also, we write 
$\|\tet\|$ for $\min\{|\tet-n|:n\in \dbZ\}$ and $e(z)$ for $e^{2\pi {\rm i}z}$. 

\section{Preliminary infrastructure} The proofs of Theorems \ref{theorem1.1} and 
\ref{theorem1.3} make use of the Hardy-Littlewood method, with smooth Weyl sums 
playing a pivotal role. We denote the set of $R$-smooth integers not exceeding $P$ by 
$\mathscr A(P,R)$, so that
\[
\mathscr A(P,R)=\{ n\in [1,P]\cap \dbZ:\text{$p|n$ implies $p\le R$}\}.
\]
We note that the standard theory of smooth numbers shows that whenever $\eta\in (0,1)$, 
then there is a positive number $c_\eta$ with the property that 
$\text{card}(\mathscr A(P,R))\sim c_\eta P$ as $P\rightarrow \infty$ (see for example 
\cite[Lemma 12.1]{Vau1997}).\par

Fix $k_i\in \dbN$ $(i\ge 1)$ with $2\le k_1\le k_2\le \ldots $. Let $s$ be a natural number, 
and define $\tet=\tet_s(\bfk)$ by putting
\begin{equation}\label{2.1}
\tet_s(\bfk)=\sum_{i=1}^s\frac{1}{k_i}.
\end{equation}
For now, it suffices to remark that we have in mind imposing the condition $\tet>2$, 
although we shall later impose more onerous conditions on $s$. We consider a natural 
number $n$ sufficiently large in terms of $s$ and $k_1,\ldots ,k_s$, and we seek a 
representation of $n$ in the form
\begin{equation}\label{2.2}
x_1^{k_1}+x_2^{k_2}+\ldots +x_s^{k_s}=n.
\end{equation}
When $1\le i\le s$, we put
\begin{equation}\label{2.3}
P_i=n^{1/k_i}
\end{equation}
and observe that all positive integral solutions of the Diophantine equation \eqref{2.2} 
satisfy the bound $x_i\le P_i$ $(1\le i\le s)$. Fix $\eta$ to be a positive number sufficiently 
small in terms of $s$ and $k_1,\ldots ,k_s$, in a manner that will become clear in due 
course. Our goal is to establish a lower bound for the number $T(n;\eta)$ of solutions of 
the equation \eqref{2.2} with $x_i\in \mathscr A(P_i,P_i^\eta)$ $(1\le i\le s)$.

\par The smooth Weyl sums 
$f_i(\alp)$ that are key to our arguments are defined by
\begin{equation}\label{2.4}
f_i(\alp)=\sum_{x\in \mathscr A(P_i,P_i^\eta)}e(\alp x^{k_i}).
\end{equation}
Writing
\begin{equation}\label{2.5}
\mathscr F(\alp)=f_1(\alp)f_2(\alp)\cdots f_s(\alp),
\end{equation}
it follows via orthogonality that
\begin{equation}\label{2.6}
T(n;\eta)=\int_0^1 \mathscr F(\alp) e(-n\alp)\d\alp .
\end{equation}

\par We derive an asymptotic formula for $T(n;\eta)$ by means of the circle method, the 
successful application of which requires the introduction of a Hardy-Littlewood dissection. 
Write $L=\log n$. We take the set of major arcs $\grK$ to be the union of the intervals
\[
\grK(q,a)=\{ \alp\in [0,1): |\alp-a/q|\le L^{1/15}n^{-1}\},
\]
with $0\le a\le q\le L^{1/15}$ and $(a,q)=1$. The set of minor arcs complementary to 
$\grK$ is then $\grk=[0,1)\setminus \grK$. Our first objective, which we complete in 
\S\S4 and 5, is to establish that for a suitable positive number $\del$, provided that $s$ is 
suitably large in terms of $\bfk$, one has an upper bound of the shape
\begin{equation}\label{2.7}
\int_\grk |\mathscr F(\alp)|\d\alp \ll n^{\tet-1}L^{-\del}.
\end{equation}
The major arc asymptotics is then the central theme of \S\S6 and 7, where we confirm the 
lower bound
\begin{equation}\label{2.8}
\int_\grK \mathscr F(\alp)e(-n\alp)\d\alp \gg n^{\tet-1},
\end{equation}
again for suitably large values of $s$, and with $n$ sufficiently large in terms of $s$, $\bfk$ 
and $\eta$. By combining the bounds \eqref{2.7} and \eqref{2.8} within \eqref{2.6}, we 
conclude that $T(n;\eta)\gg n^{\tet-1}$, so that all large enough integers $n$ 
possess a representation in the shape \eqref{2.2}. This confirms the respective conclusions 
of Theorems \ref{theorem1.1} and \ref{theorem1.3}, the only problem remaining being 
that of determining how large $s$ must be so that the estimates \eqref{2.7} and 
\eqref{2.8} hold true. Appropriate bounds on $s$ will be determined in \S\S 4, 5 and 7.

\section{An estimate of Weyl-type} This section concerns estimates for the exponential 
sums $f_i(\alp)$ of use on sets of minor arcs more general than the arcs $\grk$ introduced 
in the previous section. Consider a natural number $k\ge 2$ and a large positive number 
$P$. We take $Q$ to be a parameter with $1\le Q\le P^{k/2}$. The major arcs $\grM(Q)$ 
are then defined to be the union of the sets
\[
\grM(q,a;Q)=\{ \alp \in [0,1): |q\alp -a|\le QP^{-k}\},
\]
with $0\le a\le q\le Q$ and $(a,q)=1$. The complementary set of minor arcs is then defined 
by putting $\grm(Q)=[0,1)\setminus \grM(Q)$. Finally, we make use of the dyadically 
truncated set of arcs $\grN(Q)=\grM(Q)\setminus \grM(Q/2)$. Notice that, as a 
consequence of Dirichlet's approximation theorem, one has $[0,1)=\grM(P^{k/2})$.\par

Our interest lies in estimates for the exponential sum
\[
f(\alp;P,R)=\sum_{x\in \mathscr A(P,R)}e(\alp x^k),
\]
valid when $R$ is a positive number with $R\le P^\eta$ for a suitably small positive number 
$\eta$, and $\alp \in \grN(Q)$. In order to describe these estimates, we recall the concept 
of an admissible exponent from the theory of smooth Weyl sums. A real number $\Delta_s$ 
is referred to as an {\it admissible exponent} (for $k$) if it has the property that, whenever 
$\eps>0$ and $\eta$ is a positive number sufficiently small in terms of $\eps$, $k$ and $s$, 
then whenever $2\le R\le P^\eta$ and $P$ is sufficiently large, one has
\[
\int_0^1 |f(\alp;P,R)|^s\d\alp \ll P^{s-k+\Del_s+\eps}.
\]
Here, the underlying parameter is $P$ and the constant implicit in Vinogradov's notation 
may depend on $\eps$, $\eta$, $k$ and $s$. One may confirm that for all positive numbers 
$s$, there is no loss of generality in supposing that one has 
$\max \{ 0,k-s/2\}\le \Del_s\le k$.\par

In order to facilitate concision, from this point onwards we adopt the extended $\eps$, $R$
notation routinely employed by scholars working with smooth Weyl sums while applying the 
Hardy-Littlewood method. Thus, whenever a statement involves the letter $\eps$, then it is 
asserted that the statement holds for any positive real number assigned to $\eps$. Implicit 
constants stemming from Vinogradov or Landau symbols may depend on $\eps$, as well as 
ambient parameters implicitly fixed such as $k$ and $s$. If a statement also involves the 
letter $R$, either implicitly or explicitly, then it is asserted that for any $\eps>0$ there is a 
number $\eta>0$ such that the statement holds uniformly for $2\le R\le P^\eta$. Our 
arguments will involve only a finite number of statements, and consequently we may pass 
to the smallest of the numbers $\eta$ that arise in this way, and then have all estimates in 
force with the same positive number $\eta$. Notice that $\eta$ may be assumed 
sufficiently small in terms of $k$, $s$ and $\eps$.\par

Associated with a family $(\Del_s)_{s>0}$ of admissible exponents for $k$ is the number
\begin{equation}\label{3.1}
\tau(k)=\max_{w\in \dbN}\frac{k-2\Del_{2w}}{4w^2},
\end{equation}
an exponent which satisfies the bound $\tau(k)\le 1/(4k)$. For each positive number $s$, 
one then has the related number
\begin{equation}\label{3.2}
\Del_s^*=\min_{0\le t\le s-2}\left(\Del_{s-t}-t\tau(k)\right),
\end{equation}
which we have described elsewhere as an {\it admissible exponent for minor arcs} (see the 
preamble to \cite[Theorem 5.2]{BW2022} for a discussion of these exponents).\par

We recall two consequences of our recent work \cite{BW2022} on Waring's problem.

\begin{lemma}\label{lemma3.1} Suppose that $k\ge 2$, $s\ge 2$ and $\Del_s^*$ is an 
admissible exponent for minor arcs satisfying $\Del_s^*<0$. Let $\kap$ be a positive 
number with $\kap\le k/2$. Then, whenever $P^\kap\le Q\le P^{k/2}$, one has the bound
\[
\int_{\grm(Q)}|f(\alp;P,R)|^s\d\alp \ll_\kap P^{s-k}Q^{\eps-2|\Del_s^*|/k}.
\]
\end{lemma}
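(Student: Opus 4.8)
The plan is to bound the minor arc integral by pushing smooth Weyl sum machinery from the full unit interval onto the dyadic subfamilies $\grN(2^j)$ with $Q\le 2^j\le P^{k/2}$, and then summing over $j$. The starting point will be to dissect $\grm(Q)$ as a disjoint union of the arcs $\grN(2^j)$ for integers $j$ with $\log_2 Q\le j\le \tfrac{k}{2}\log_2 P$, using the fact noted in the text that $[0,1)=\grM(P^{k/2})$ and that $\grm(Q)=\grM(P^{k/2})\setminus\grM(Q)$ is the union of the annuli $\grN(2^j)$ over this range of $j$. Thus it suffices to establish, for each such $j$, a bound of the shape
\[
\int_{\grN(2^j)}|f(\alp;P,R)|^s\d\alp \ll P^{s-k}(2^j)^{\eps-2|\Del_s^*|/k},
\]
and then sum the geometric-type series in $j$, the factor $(2^j)^{-2|\Del_s^*|/k}$ guaranteeing convergence so that the sum is dominated by its smallest term, namely the one with $2^j\asymp Q$; a further $\eps$ is absorbed harmlessly.

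The heart of the matter is therefore the single-annulus estimate on $\grN(2^j)$. Here I would combine a pointwise Weyl-type bound for $f(\alp;P,R)$ valid on $\grN(Q)$ — of the sort prepared in \S3, giving a saving of a power of $Q$ relative to $P$, quantified through the exponent $\tau(k)$ appearing in \eqref{3.1} — with a mean value estimate $\int_0^1|f(\alp;P,R)|^{s-t}\d\alp \ll P^{s-t-k+\Del_{s-t}+\eps}$ flowing from the admissible exponent $\Del_{s-t}$. Writing $|f|^s=|f|^t\cdot|f|^{s-t}$, bounding the first factor pointwise on $\grN(2^j)$ and the second in mean, and optimizing over $0\le t\le s-2$ produces precisely the exponent $\Del_s^*=\min_{0\le t\le s-2}(\Del_{s-t}-t\tau(k))$ from \eqref{3.2} in the power of $P$, together with the power $(2^j)^{-t\tau(k)}$, and one checks that $t\tau(k)$ at the optimal $t$ matches $2|\Del_s^*|/k$ up to the normalization, using the definition of $\tau(k)$ and the constraint $\tau(k)\le 1/(4k)$. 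This is exactly the content we may quote from \cite{BW2022} (the preamble to Theorem 5.2 there), so in practice this step is a citation rather than a fresh derivation.

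The main obstacle, and the point requiring genuine care, is the uniformity in $\kap$: the hypothesis only guarantees $P^\kap\le Q$, so the number of dyadic scales $j$ is $\asymp (\tfrac{k}{2}-\kap)\log_2 P$, which is unbounded as $P\to\infty$, and one must ensure that summing over all these scales costs only an implicit constant depending on $\kap$ (through the number of scales below $P^\kap$, or rather through the rate of geometric decay) and a single power of $P^\eps$, rather than a power of $\log P$ to a growing exponent. This is handled by the strict negativity $\Del_s^*<0$: the ratio of consecutive terms is $2^{-2|\Del_s^*|/k}<1$, a fixed constant, so $\sum_{j\ge \log_2 Q}(2^j)^{\eps-2|\Del_s^*|/k}\ll_\kap Q^{\eps-2|\Del_s^*|/k}$ with the implied constant depending only on $|\Del_s^*|$ and $\eps$; the role of $\kap$ is merely to certify that the range is nonempty and to let us absorb the finitely-many-looking tail into the constant. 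One should also take a little care that the pointwise Weyl bound on $\grN(Q)$ is genuinely uniform across the whole range $P^\kap\le Q\le P^{k/2}$, which it is, since it rests only on Dirichlet-type rational approximations and the smooth-number Weyl inequality of \S3, neither of which degrades at the top of the range. With these points in place, assembling the pieces is routine.
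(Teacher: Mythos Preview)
The paper's proof is a single sentence: the bound is immediate from \cite[Theorem~5.3]{BW2022}. Your proposal ultimately lands in the same place, since you concede that the annulus estimate is ``in practice a citation'' of \cite{BW2022}; the dyadic summation you wrap around it is then harmless book-keeping, and the reference should be to Theorem~5.3 there rather than the preamble to Theorem~5.2 (the latter is only where $\Del_s^*$ is introduced).

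That said, the derivation you sketch before falling back on the citation has two genuine gaps. First, the pointwise Weyl-type bound on $\grN(Q)$ ``of the sort prepared in \S3'' is Lemma~\ref{lemma3.3}, and the proof of that lemma \emph{uses} Lemma~\ref{lemma3.1}; within the logical structure of this paper your argument is therefore circular. Second, taking the mean value $\int_0^1|f|^{s-t}\d\alp$ over the full interval produces a factor $P^{\Del_{s-t}}$, and since $2^j\le P^{k/2}$ one has $P^{\Del_{s-t}}\ge (2^j)^{2\Del_{s-t}/k}$ whenever $\Del_{s-t}\ge 0$, so this factor cannot be traded upward for the power of $2^j$ you claim. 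To obtain the clean exponent $(2^j)^{2\Del_s^*/k}$ one must instead use the \emph{restricted} mean value over $\grM(2^j)$ supplied by Lemma~\ref{lemma3.2} (equivalently \cite[Theorem~4.2]{BW2022}), which already places $\Del_{s-t}$ in the exponent of $Q$. With those two repairs your dyadic reconstruction becomes essentially the proof of \cite[Theorem~5.3]{BW2022}, which is precisely why the paper simply cites that result.
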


\begin{proof} This is immediate from \cite[Theorem 5.3]{BW2022}.
\end{proof}

\begin{lemma}\label{lemma3.2} Suppose that $s$ is a real number with $s\ge 4k$ and 
$\Del_s$ is an admissible exponent. Then whenever $Q$ is a real number with 
$1\le Q\le P^{k/2}$, one has the uniform bound
\[
\int_{\grM(Q)}|f(\alp;P,R)|^s\d\alp \ll P^{s-k}Q^{\eps+2\Del_s/k}.
\]
\end{lemma}

\begin{proof} For the sake of concision, write $f(\alp)$ for $f(\alp;P,R)$. Suppose first that 
$P^{1/(2k)}<Q\le P^{k/2}$. Then the conclusion of \cite[Theorem 4.2]{BW2022} shows 
that whenever $s\ge 2$, one has
\[
\int_{\grM(Q)}|f(\alp)|^s\d\alp \ll P^{s-k+\eps}Q^{2\Del_s/k},
\]
and the desired conclusion is immediate.\par

In order to handle the range of $Q$ with $1\le Q\le P^{1/(2k)}$, we turn to the bounds 
made available in \cite{VW1991}. We take a pedestrian approach sufficient for our 
subsequent application, though we note that with greater effort the condition $s\ge 4k$ 
could be relaxed at this point. Suppose first that $\alp \in \grM(Q)$ for some real number 
$Q$ satisfying 
\[
\exp ((\log P)^{1/3})\le Q\le P^{1/(2k)}.
\]
When $a\in \dbZ$ and $q\in \dbN$ satisfy $(a,q)=1$ and 
$0\le a\le q\le \tfrac{1}{2}P^{k/2}$, the intervals $\grM (q,a;\tfrac{1}{2}P^{k/2})$ 
comprising $\grM(\tfrac{1}{2}P^{k/2})$ are disjoint. For 
$\alp \in \grM(q,a;\frac{1}{2}P^{k/2})\subseteq \grM(\tfrac{1}{2}P^{k/2})$, we put
\[
\Ups(\alp)=(q+P^k|q\alp -a|)^{-1}.
\]
Meanwhile, for $\alp\in [0,1)\setminus \grM(\tfrac{1}{2}P^{k/2})$, we put $\Ups(\alp)=0$. 
Note that when $\alp \in \grN(Q)$ one has $\Ups(\alp)\ll Q^{-1}$. Then as a consequence 
of \cite[Lemma 7.2]{VW1991}, much as in the argument leading to 
\cite[equation (6.3)]{BW2022}, we find that when $\alp \in \grN(Q)$, one has
\[
f(\alp)\ll P(\log P)^3\Ups(\alp)^{-\eps+1/(2k)}+P^{1-1/(2k)}\ll PQ^{2\eps -1/(2k)}.
\]
We remark in this context that the constraint $k\ge 3$ of \cite[equation (6.3)]{BW2022} is 
unnecessary in present circumstances. When instead
\[
1\le Q\le \exp((\log P)^{1/3}),
\]
we appeal to \cite[Lemma 8.5]{VW1991}, deducing as in the cognate argument associated 
with \cite[Theorem 6.1]{BW2022} that
\[
f(\alp)\ll P\Ups(\alp)^{-\eps+1/k}+P\exp(-(\log P)^{1/3})\ll PQ^{-1/(2k)}.
\]
Again, the constraint $k\ge 3$ of \cite{BW2022} is unnecessary in present circumstances. 
Thus, in view of the hypothesis $s\ge 4k$, it follows that when $1\le Q\le P^{1/(2k)}$ one 
has
\begin{align*}
\int_{\grN(Q)}|f(\alp)|^s\d\alp &\ll P^sQ^{\eps-s/(2k)}\text{mes}(\grM(Q))\\
&\ll P^sQ^{\eps -s/(2k)}(Q^2P^{-k})\ll P^{s-k}Q^\eps .
\end{align*}
This again delivers the estimate asserted in the statement of the lemma, since
\[
\int_{\grM(Q)}|f(\alp)|^s\d\alp \le \sum_{\substack{l=0\\ 2^l\le Q}}^\infty 
\int_{\grN(2^{-l}Q)}|f(\alp)|^s\d\alp \ll P^{s-k}Q^\eps .
\]
This completes the proof of the lemma.
\end{proof}

We obtain a pointwise bound for $f(\alp)=f(\alp;P,R)$ when $\alp \in \grm(Q)$ by 
application of the Gallagher-Sobolev inequality.

\begin{lemma}\label{lemma3.3} Suppose that $k\ge 2$ and $0<\rho(k)<2\tau(k)/k$. Then, 
uniformly in $1\le Q\le P^{k/2}$, one has the bound
\[
\sup_{\alp\in \grm(Q)}|f(\alp;P,R)|\ll PQ^{-\rho(k)}.
\]
\end{lemma}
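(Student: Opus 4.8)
The plan is to deduce the pointwise bound from the mean-value estimates of Lemmas~\ref{lemma3.1} and~\ref{lemma3.2} via a Gallagher-type Sobolev inequality, which relates the sup-norm of an exponential polynomial on a subset of the unit circle to its mean value and the mean value of its derivative. First I would recall the relevant form of the Gallagher--Sobolev inequality: for a trigonometric polynomial $g(\alp)=\sum_{m\le X}c_m e(\alp m)$ and a measurable set $\grE\subseteq[0,1)$, one has a bound of the shape
\[
\sup_{\alp\in \grE}|g(\alp)|^2\ll X\int_{\grE^*}|g(\alp)|^2\d\alp +\Bigl(\int_{\grE^*}|g(\alp)|^2\d\alp\Bigr)^{1/2}\Bigl(\int_{\grE^*}|g'(\alp)|^2\d\alp\Bigr)^{1/2},
\]
where $\grE^*$ is a mild inflation of $\grE$. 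Here $g(\alp)=f(\alp;P,R)$, so $X\asymp P^k$ and $g'(\alp)\ll P^{k}f_{*}(\alp)$, where $f_*$ is a smooth Weyl sum of the same type with an extra weight of size $O(P^{k-1})$ per term; in practice one estimates $\int|g'|^2$ by $P^{2k-2}\int|f|^2$ up to harmless factors, or directly by a large-sieve/divisor argument. The key point is that $\grm(Q)$ inflates to a set still contained in $\grm(cQ)$ for some constant $c<1$, so the mean values appearing on the right may be taken over minor arcs $\grm(Q')$ with $Q'\asymp Q$.

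The heart of the argument is then to feed Lemma~\ref{lemma3.1} with a well-chosen even value $s=2w$ for which $\Del_s^*<0$. Since $\tau(k)>0$ and $\Del_s^*=\min_{0\le t\le s-2}(\Del_{s-t}-t\tau(k))$, for $s$ large in terms of $k$ one has $\Del_s^*<0$ with $|\Del_s^*|$ bounded below; more precisely, the definition \eqref{3.1} of $\tau(k)$ guarantees that $2|\Del_s^*|/k$ exceeds any prescribed quantity below $2\tau(k)/k$ once $s$ is large enough, because the second term in \eqref{3.2} grows linearly in $t$ at rate $\tau(k)$ while $\Del_{s-t}\le k$. Applying Lemma~\ref{lemma3.1} on $\grm(Q)$ (legitimately, since $\grm(Q)\subseteq\grm(Q)$ and $Q\ge 1=P^{0}$, taking $\kap$ small) yields
\[
\int_{\grm(Q)}|f(\alp;P,R)|^{2w}\d\alp\ll P^{2w-k}Q^{\eps-2|\Del_{2w}^*|/k}.
\]
Inserting this into the Gallagher--Sobolev inequality applied to $|f|^{w}$ in place of $g$ — so that the exponent becomes $2w$ throughout — gives
\[
\sup_{\alp\in\grm(Q)}|f(\alp;P,R)|^{2w}\ll P^k\cdot P^{2w-k}Q^{\eps-2|\Del_{2w}^*|/k}+\bigl(P^{2w-k}Q^{-2|\Del_{2w}^*|/k}\bigr)^{1/2}\bigl(P^{2k}\cdot P^{2w-k}Q^{-2|\Del_{2w}^*|/k}\bigr)^{1/2}P^{\eps},
\]
and after simplification both terms are $\ll P^{2w}Q^{\eps-2|\Del_{2w}^*|/k}$. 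Taking $2w$-th roots yields $\sup_{\alp\in\grm(Q)}|f(\alp;P,R)|\ll PQ^{(\eps-2|\Del_{2w}^*|/k)/(2w)}$, and choosing $w$ large enough that $2|\Del_{2w}^*|/(kw)$ lies in the interval $(\rho(k),2\tau(k)/k)$ — possible precisely by the definition of $\tau(k)$ and the hypothesis $\rho(k)<2\tau(k)/k$ — delivers the claimed bound, absorbing the $\eps$ into the slack between $\rho(k)$ and the true exponent.

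The main obstacle is the bookkeeping around the derivative term in the Sobolev inequality: one must verify that $\int_{\grm(Q')}|(\,|f|^w)'|^2\d\alp$ really is controlled by $P^{2k}$ times a $2w$-th moment of $f$ (or a closely related smooth Weyl sum), uniformly in the smooth-number parameter $R$ and without losing more than $P^\eps$. This requires either differentiating $|f|^w$ carefully — writing $|f|^{2w}=f^{w}\overline{f}^{\,w}$ and noting that each derivative produces a factor $x^{k}\ll P^k$ inside the sum — together with an application of the same minor-arc mean-value estimate to the resulting weighted sum, invoking a standard comparison (as in the weighted versions in \cite{BW2022, VW1991}) to show the extra polynomial weight costs only $P^{\eps}$ after the arc restriction. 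A cleaner route, which I would ultimately prefer, is to apply Gallagher's lemma directly to $f$ itself rather than to $|f|^w$: then $X\asymp P^k$, the derivative contributes $P^{2k}\int|f'|^2\ll P^{2k}\cdot P^{2k}\cdot P^{2-k}\ll P^{3k+2-k}$ crudely, which is too lossy — so in fact one does want the power $|f|^w$ to keep the moments aligned, and the weighted mean-value comparison is unavoidable. Modulo that routine but slightly delicate step, the argument is a direct transcription of the standard passage from smooth-Weyl-sum moment bounds to pointwise minor-arc estimates.
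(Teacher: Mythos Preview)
Your overall strategy --- Gallagher--Sobolev to convert moment bounds on $\grm(Q)$ into a pointwise bound --- is exactly the paper's approach for the principal range of $Q$. However, there is a genuine gap and one point where your treatment is materially vaguer than what is needed.

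\textbf{The gap: small $Q$.} You invoke Lemma~\ref{lemma3.1} ``legitimately, since $\grm(Q)\subseteq\grm(Q)$ and $Q\ge 1=P^{0}$, taking $\kap$ small''. This is not legitimate. Lemma~\ref{lemma3.1} requires $P^\kap\le Q$ for a \emph{fixed} positive $\kap$, and the implicit constant depends on $\kap$; you cannot let $\kap\to 0$. Consequently your argument covers only $P^{\kap}\le Q\le P^{k/2}$ for some fixed $\kap>0$, and says nothing for $1\le Q<P^{\kap}$. The paper treats the range $P^{1/(2k)}\le Q\le P^{k/2}$ via the Gallagher--Sobolev argument, and then handles $1\le Q<P^{1/(2k)}$ by an entirely separate pointwise argument based on \cite[Lemmata~7.2 and~8.5]{VW1991}, which give directly $f(\alp)\ll PQ^{-1/(2k)}\ll PQ^{\eps-2\tau(k)/k}$ on $\grm(Q)$ in that regime. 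You would need to supply something of this sort.

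\textbf{The derivative term.} Your discussion of $\int_{\grm(Q')}|(\,|f|^w)'|^2\d\alp$ is left as ``routine but slightly delicate'', with a suggestion that a weighted mean-value comparison costing only $P^\eps$ will do. The paper avoids this entirely by a cleaner device: apply Montgomery's inequality to $g=f^s$ (so that $|g'|=s|f|^{s-1}|f'|$), obtaining
\[
|f(\alp)|^s\ll P^k\int_{\grm(Q/2)}|f|^s\d\bet+\int_{\grm(Q/2)}|f'||f|^{s-1}\d\bet,
\]
and then isolate the derivative by H\"older with exponent $u=4^k$: $\int|f'||f|^{s-1}\le(\int_0^1|f'|^u)^{1/u}(\int_{\grm(Q/2)}|f|^v)^{1-1/u}$. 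Since $u$ is even, $\int_0^1|f'|^u\le(2\pi P^k)^u\int_0^1|f|^u\ll(P^k)^uP^{u-k}$ via the underlying Diophantine equation and Hua's lemma; the remaining factor is another minor-arc moment handled by Lemma~\ref{lemma3.1}. This sidesteps any weighted-sum comparison and gives the exponent $2t\tau(k)/(u+tk)\to 2\tau(k)/k$ as $t\to\infty$, which is the mechanism behind the hypothesis $\rho(k)<2\tau(k)/k$.
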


\begin{proof} We consider in the first instance the situation in which 
$P^{1/(2k)}\le Q\le P^{k/2}$. Here, we apply Lemma \ref{lemma3.1} with $s=u+tk$, 
where $u=4^k$ and $t$ is sufficiently large in terms of $k$. The value of $u$ here has 
been chosen large enough that the classical theory of Waring's problem is comfortably 
applicable. With more care one could work with a choice for $u$ little more than $k\log k$. 
On considering the underlying Diophantine equation, working with the value of $u$ already 
chosen, it follows from Hua's lemma and a routine application of the circle method along the 
lines described in \cite[Chapter 2]{Vau1997} that
\begin{equation}\label{3.3}
\int_0^1|f(\alp)|^u\d\alp \le \int_0^1\biggl| \sum_{1\le x\le P}e(\alp x^k)\biggr|^u
\d\alp \ll P^{u-k}.
\end{equation} 
In particular, the exponent $\Del_{s-tk}=0$ is admissible for $k$, and thus it follows from 
\eqref{3.2} that $\Del_s^*=-tk\tau(k)$ is an admissible exponent for minor arcs. We 
therefore infer from Lemma \ref{lemma3.1} that
\begin{equation}\label{3.4}
\int_{\grm(Q)}|f(\alp)|^s\d\alp \ll P^{s-k}Q^{\eps-2t\tau(k)}.
\end{equation}

\par Consider next a real number $\alp$ with $\alp\in \grm(Q)$, and let $\del$ be any real 
number with $|\del|\le P^{-k}$. Suppose, if possible, that $\alp+\del\in \grM(Q/2)$. In 
such circumstances, there exist $a\in \dbZ$ and $q\in \dbN$ with $(a,q)=1$, $1\le q\le Q/2$ 
and $|q(\alp+\del)-a|\le \tfrac{1}{2}QP^{-k}$. Consequently, one has
\[
|q\alp -a|\le q|\del|+\tfrac{1}{2}QP^{-k}\le QP^{-k},
\]
whence $\alp\in \grM(Q)$. This yields a contradiction, so we are forced to conclude that 
$\alp+\del\in \grm(Q/2)$. This observation allows us to estimate $f(\alp)$ pointwise on 
$\grm(Q)$ in terms of mean values for $f(\alp)$ over $\grm(Q/2)$. Indeed, as a 
consequence of the Sobolev-Gallagher inequality (see for example Montgomery 
\cite[Lemma 1.1]{Mon1971}), we have
\[
|f(\alp)|^s\le (2P^{-k})^{-1}\int_{|\bet-\alp|\le P^{-k}}|f(\bet)|^s\d\bet 
+s\int_{|\bet-\alp|\le P^{-k}}|f'(\bet)f(\bet)^{s-1}|\d\bet .
\]
Hence, whenever $\alp \in \grm(Q)$, we infer that
\begin{equation}\label{3.5}
|f(\alp)|^s\ll P^kI_1+I_2,
\end{equation}
where
\[
I_1=\int_{\grm(Q/2)}|f(\bet)|^s\d\bet \quad \text{and}\quad I_2=\int_{\grm(Q/2)}
|f'(\bet)f(\bet)^{s-1}|\d\bet .
\]

\par The bound
\begin{equation}\label{3.6}
I_1\ll P^{s-k}Q^{\eps-2t\tau(k)}
\end{equation}
follows from \eqref{3.4}. Meanwhile, by applying H\"older's inequality, we see that
\begin{equation}\label{3.7}
I_2\le I_3^{1/u}I_4^{1-1/u},
\end{equation}
where
\begin{equation}\label{3.8}
I_3=\int_0^1|f'(\bet)|^u\d\bet \quad \text{and}\quad I_4=\int_{\grm(Q/2)}
|f(\bet)|^v\d\bet ,
\end{equation}
in which
\[
v=\frac{s-1}{1-1/u}.
\]

\par Recall that $u=4^k$ is even. Then since
\[
f'(\bet)=2\pi {\rm i}\sum_{x\in \mathscr A(P,R)}x^ke(\bet x^k),
\]
it follows from \eqref{3.8} by considering the underlying Diophantine equations that
\[
I_3\le (2\pi P^k)^u\int_0^1|f(\bet)|^u\d\bet .
\]
On recalling \eqref{3.3}, therefore, we deduce that
\begin{equation}\label{3.9}
I_3\ll (P^k)^uP^{u-k}.
\end{equation}
Meanwhile, since $s>u$ we have $v>s$, and so it follows from \eqref{3.8} via Lemma 
\ref{lemma3.1} that
\begin{equation}\label{3.10}
I_4\ll P^{v-k}Q^{\eps -2|\Del_v^*|/k},
\end{equation}
where
\[
\Del_v^*=-(v-u)\tau(k)=-\Bigl( \frac{s-u}{1-1/u}\Bigr) \tau(k)=-\frac{tk\tau(k)}{1-1/u}.
\]
On substituting \eqref{3.9} and \eqref{3.10} into \eqref{3.7}, we find that
\begin{equation}\label{3.11}
I_2\ll P^k(P^{u-k})^{1/u}(P^{v-k})^{1-1/u}Q^{\eps-2t\tau(k)}\ll P^sQ^{\eps-2t\tau(k)}.
\end{equation}

\par On substituting \eqref{3.6} and \eqref{3.11} into \eqref{3.5}, we conclude that
\[
|f(\alp)|^s\ll P^sQ^{\eps-2t\tau(k)}.
\]
Thus, whenever $\alp \in \grm(Q)$, we have $f(\alp)\ll PQ^{\eps-\kap}$, where
\[
\kap=\frac{2t\tau(k)}{u+tk}.
\]
We now take $t$ sufficiently large in terms of $k$ and obtain the upper bound
\[
f(\alp)\ll PQ^{\eps-2\tau(k)/k}.
\]
This confirms the upper bound that we sought when $P^{1/(2k)}\le Q\le P^{k/2}$.\par

In order to handle the range of $Q$ with $1\le Q<P^{1/(2k)}$, just as in the proof of 
Lemma \ref{lemma3.2} we turn to the bounds made available in \cite{VW1991}. For the 
sake of concision we adopt the notation of the proof of the latter lemma. Suppose first that 
one has $(\log P)^{60ks}\le Q\le P^{1/(2k)}$ and $\alp \in \grm(Q)$. Then 
\cite[Lemma 7.2]{VW1991} delivers the bound
\begin{align*}
f(\alp)&\ll P(\log P)^3\Ups(\alp)^{-\eps+1/(2k)}+P^{1-\tau(k)+\eps}\\
&\ll PQ^{-1/(3k)}\ll PQ^{\eps-2\tau(k)/k}.
\end{align*}
When instead $1\le Q\le (\log P)^{60ks}$, we appeal to \cite[Lemma 8.5]{VW1991}, 
deducing that
\begin{align*}
f(\alp)&\ll P\Ups(\alp)^{-\eps+1/k}+P(\log P)^{-60ks}\\
&\ll PQ^{-1/(2k)}\ll PQ^{\eps-2\tau(k)/k}.
\end{align*}
Thus, in all circumstances, we have the estimate asserted in the statement of the lemma, 
and the proof is complete.
\end{proof}

For the purposes of this paper, we apply a bound for $\tau(k)$ sufficient for our 
applications, though falling very slightly short of the sharpest bound attainable using current 
technology. In this context, it is useful to introduce the exponent $\ome=\ome(k)$, defined 
by
\begin{equation}\label{3.12}
\ome(k)=1/(Dk^2),\quad \text{where}\quad D=4.51396.
\end{equation}

\begin{lemma}\label{lemma3.4}
When $k\ge 2$ and $1\le Q\le P^{k/2}$, one has the uniform bound 
\[
\sup_{\alp\in \grm(Q)}|f(\alp;P,R)|\ll PQ^{-\ome}.
\]
\end{lemma}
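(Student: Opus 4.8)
The plan is to obtain Lemma~\ref{lemma3.4} as the special case $\rho(k)=\ome(k)$ of Lemma~\ref{lemma3.3}. For this deduction to be legitimate it is required that $\ome(k)<2\tau(k)/k$, and on recalling the definition \eqref{3.12} this is precisely the assertion that
\[
\tau(k)>\frac{1}{2Dk}=\frac{1}{9.02792\,k}\qquad(k\ge 2).
\]
Thus the entire task is to secure a lower bound for $\tau(k)$ of the right order of magnitude. The inequality $\tau(k)\le 1/(4k)$ recorded after \eqref{3.1} shows that such a bound is not wasteful: we are asking only that $\tau(k)$ lie within a fixed factor (roughly $0.44$) of its largest conceivable value.

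To prove the displayed lower bound I would work directly from the formula \eqref{3.1}. Since that formula presents $\tau(k)$ as a maximum over $w\in\dbN$, it suffices, for each $k\ge 2$, to produce a single admissible choice of $w$ together with an admissible exponent $\Del_{2w}$ for $k$ for which
\[
\frac{k-2\Del_{2w}}{4w^2}>\frac{1}{9.02792\,k},\qquad\text{equivalently}\qquad
k\bigl(k-2\Del_{2w}\bigr)>\frac{4w^2}{9.02792}.
\]
Into this inequality I would feed the sharpest admissible exponents for smooth Weyl sums of degree $k$ currently available --- those furnished by \cite{BW2022} in combination with the resolution of the main conjecture in Vinogradov's mean value theorem --- and then optimise the choice of $w$. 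Because the numerator can be positive only when $\Del_{2w}<k/2$, while the denominator grows like $w^2$, the optimal $w$ is forced to be comparatively small (of order $k$ rather than of order $k^2$); the substance is therefore a near-optimal mean-value estimate for smooth Weyl sums already at a number of variables comparable to $k$, made uniform in $k$. For the finitely many small values of $k$ --- most importantly $k=2,3$, where classical inputs such as the fourth-moment bound for squares and Hua's lemma already supply essentially best-possible exponents --- one simply checks the displayed inequality directly. The constant $D=4.51396$ then records the outcome of this optimisation over all $k\ge 2$, which is why, as remarked before \eqref{3.12}, it falls marginally short of the very best the method permits.

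I expect the main obstacle to be exactly this point: controlling $\Del_{2w}$ in the range $w\asymp k$, uniformly in $k$. The trivial inequality $\Del_{2w}\ge\max\{0,k-w\}$ already shows that $(k-2\Del_{2w})/(4w^2)\le 1/(4k)$, with equality attainable only if $\Del_{2k}=0$, so one is compelled to quantify just how closely the smooth mean values approach optimality at a modest multiple of $k$ variables, and then to push the attendant numerical optimisation through to an explicit constant. Once the lower bound $\tau(k)>1/(9.02792\,k)$ is in hand, the conclusion of Lemma~\ref{lemma3.4} is immediate from Lemma~\ref{lemma3.3}. One could instead bypass Lemma~\ref{lemma3.3} and argue as in its proof, splitting at $Q=P^{1/(2k)}$ and invoking Lemma~\ref{lemma3.1} together with the pointwise bounds of \cite[Lemmas 7.2 and 8.5]{VW1991}; but this merely relocates the same dependence on $\tau(k)$.
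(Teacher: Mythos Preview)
Your proposal is correct and matches the paper's proof essentially line for line: the paper reduces to the inequality $2\tau(k)/k>\ome(k)$ and then invokes Lemma~\ref{lemma3.3}; for $k\ge 4$ it simply cites \cite[Lemma~7.1]{BW2022}, which records the bound $\tau(k)\ge (Ck)^{-1}$ with $C=9.027901<2D$ (precisely the optimisation you describe), and for $k\in\{2,3\}$ it checks directly via Hua's lemma that $\Del_{2^k}=0$, giving $\tau(2)\ge 1/8$ and $\tau(3)\ge 3/64$.
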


\begin{proof} When $k\ge 4$, it is shown in \cite[Lemma 7.1]{BW2022} that there is a 
family of admissible exponents satisfying the property that $\tau(k)\ge (Ck)^{-1}$, where 
$C=9.027901<2D$. Thus
\[
\frac{2}{k}\tau(k)>\frac{1}{Dk^2},
\]
and the desired conclusion follows from Lemma \ref{lemma3.3}.\par

When $k$ is equal to $2$ or $3$, we appeal to the formula \eqref{3.1} with the crude 
bound on admissible exponents available from Hua's lemma (see 
\cite[Lemma 2.5]{Vau1997}). Thus, we have the admissible exponent $\Del_{2^k}=0$ 
since
\[ 
\int_0^1|f(\alp)|^{2^k}\d\alp \le \int_0^1 \biggl|\sum_{1\le x\le P}e(\alp x^k)
\biggr|^{2^k}\d\alp \ll P^{2^k-k+\eps},
\]
and hence we deduce via \eqref{3.1} that $\tau(2)\ge 1/8$ and $\tau(3)\ge 3/64$. Thus
\[
\frac{2}{2}\tau(2)\ge \frac{1}{8}>\frac{1}{18}>\ome (2)\quad \text{and}\quad 
\frac{2}{3}\tau(3)\ge \frac{1}{32}>\frac{1}{40}>\ome (3).
\]
In each of these cases, the desired conclusion again follows from Lemma \ref{lemma3.3}.
\end{proof}

We finish this section with a formulation of our new minor arc estimate of sufficient flexibility 
that further applications may be anticipated.

\begin{theorem}\label{theorem3.5} Suppose that $a\in \dbZ$ and $q\in \dbN$ satisfy 
$(a,q)=1$. Then one has
\[
f(\alp;P,R)\ll P\left( \lam^{-1}+\lam P^{-k}\right)^\ome,
\]
where $\lam =q+P^k|q\alp -a|$.
\end{theorem}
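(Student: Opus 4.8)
The plan is to deduce Theorem~\ref{theorem3.5} from Lemma~\ref{lemma3.4} by choosing the parameter $Q$ in that lemma appropriately in terms of $\lam=q+P^k|q\alp-a|$. The first observation is that, by Dirichlet's theorem applied within $[0,1)$, every $\alp$ lies in $\grM(P^{k/2})$, so it is harmless to assume that the rational $a/q$ furnished in the hypothesis satisfies $q\le P^{k/2}$ and $|q\alp-a|\le q^{-1}P^{-k/2}\le P^{-k/2}$: if this is not the case for the given $a/q$, then $\lam\ge P^{k/2}$, and the asserted bound $f(\alp;P,R)\ll P(\lam^{-1}+\lam P^{-k})^\ome$ is weaker than the trivial bound $f(\alp;P,R)\ll P$ whenever $\lam\in[P^{k/2},P^k]$, and is in any case implied by $\lam^{-1}\ge P^{-k/2}$ combined with the trivial estimate. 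So we may suppose $1\le\lam\le P^{k/2}$.

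Next I would relate $\lam$ to membership of $\grM(Q)$ for the appropriate $Q$. Given $\lam$ with $1\le\lam\le P^{k/2}$, set $Q=2\lam$ if $\lam\le\tfrac12 P^{k/2}$, and otherwise simply take $Q=P^{k/2}$ (in which case $[0,1)=\grM(Q)$ and there is nothing to prove beyond the trivial bound, which again dominates $P(\lam^{-1}+\lam P^{-k})^\ome$ since $\lam\ge\tfrac12 P^{k/2}$ forces $\lam P^{-k}\gg P^{-k/2}$). In the principal range, I claim that $\alp\in\grm(Q/2)=\grm(\lam)$ whenever $\lam<q$ or $\lam<P^k|q\alp-a|$, i.e.\ whenever $q\le\lam$ is false or the continued-fraction denominator forces $\alp$ out of $\grM(\lam)$; more to the point, I will show that the best rational approximation controlling $\grM(\cdot)$ has the property that $\alp\notin\grM(Q')$ for $Q'$ just below $\lam$. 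Concretely: if $\alp\in\grM(q',a';Q')$ with $(a',q')=1$, $q'\le Q'$ and $|q'\alp-a'|\le Q'P^{-k}$, then either $a'/q'=a/q$—in which case $q'=q$, $Q'\ge q$ and $Q'\ge P^k|q\alp-a|=P^k|q'\alp-a'|\cdot(q/q')$... actually $Q'\ge q$ and $Q'P^{-k}\ge|q'\alp-a'|$ give $Q'\ge\lam$—or $a'/q'\ne a/q$, and then $|a'/q'-a/q|\ge 1/(qq')$ forces, via the triangle inequality, $Q'P^{-k}(q^{-1}+q'^{-1})\ge(qq')^{-1}$, hence $Q'\ge P^k/\max\{q,q'\}$... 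I would instead invoke the standard fact (as used in the proof of Lemma~\ref{lemma3.3}, the argument showing $\alp+\del\in\grm(Q/2)$) that $\alp\in\grm(\lam)$ precisely when $\lam=q+P^k|q\alp-a|$ for the unique reduced $a/q$ with $q\le\lam$, $|q\alp-a|\le\lam P^{-k}$ witnessing membership at the threshold. The upshot is $\sup_{\bet\in\grm(\lam)}|f(\bet;P,R)|\ll P\lam^{-\ome}$ from Lemma~\ref{lemma3.4} with $Q=\lam$, hence $f(\alp;P,R)\ll P\lam^{-\ome}\le P(\lam^{-1}+\lam P^{-k})^\ome$, completing this case.

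The remaining case is $\lam>\tfrac12 P^{k/2}$, equivalently $\lam^{-1}+\lam P^{-k}\gg P^{-k/2}$, where the trivial bound $f(\alp;P,R)\ll P$ already gives $f(\alp;P,R)\ll P\cdot 1\ll P(\lam^{-1}+\lam P^{-k})^{\ome}\cdot(P^{k/2})^{\ome}$... here one needs the slightly sharper observation that in fact $(\lam^{-1}+\lam P^{-k})^\ome\gg(P^{-k/2})^\ome=P^{-k\ome/2}$, and since $\ome=1/(Dk^2)$ with $D>4.5$ one has $P^{k\ome/2}=P^{1/(2Dk)}=o(P)$, so the trivial bound suffices. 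I expect the main obstacle to be the bookkeeping in the case distinction: pinning down exactly which $\grM(Q)$ contains $\alp$ for a given $\lam$, and verifying that the "boundary" value $Q=\lam$ places $\alp$ in $\grm(\lam)$ rather than $\grM(\lam)$, so that Lemma~\ref{lemma3.4} applies. Once that combinatorial point is settled—using the disjointness of the arcs $\grM(q,a;\tfrac12 P^{k/2})$ exactly as in the proof of Lemma~\ref{lemma3.3}—the estimate drops out immediately, and the two "large $\lam$" regimes are handled by the trivial bound together with the inequality $P^{k\ome}=o(P)$.
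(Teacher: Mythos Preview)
The proposal has a genuine gap. The core difficulty is that you try to place $\alpha$ in $\grm(\lam)$ (or $\grm(c\lam)$) directly from the \emph{given} rational $a/q$, but this need not hold. The given $a/q$ may be a poor approximation to $\alpha$, so that $\lam$ is large while $\alpha$ in fact lies in $\grM(Q')$ for some much smaller $Q'$ arising from a \emph{different} rational $b/r$. In that situation Lemma~\ref{lemma3.4} with $Q\asymp\lam$ is simply inapplicable. Your attempted verification of ``$\alpha\in\grm(\lam)$'' via the dichotomy $a'/q'=a/q$ versus $a'/q'\ne a/q$ stalls precisely here: in the second branch you only reach $Q'\gtrsim P^k/\max\{q,q'\}$, which does not force $Q'\gtrsim\lam$. (Indeed, note that from $q\le\lam$ and $|q\alp-a|\le\lam P^{-k}$ one always has $\alp\in\grM(q,a;\lam)\subseteq\grM(\lam)$, so $\alp\notin\grm(\lam)$.)

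Relatedly, your treatment of the ``large $\lam$'' regime is incorrect. When $\lam\in[P^{k/2},P^k]$ one has $\lam^{-1}+\lam P^{-k}\asymp\lam P^{-k}$, and at $\lam=P^{k/2}$ the right-hand side of the asserted bound is of order $P\cdot P^{-k\ome/2}=P^{1-1/(2Dk)}$, which is genuinely smaller than the trivial bound $P$. So the trivial estimate does \emph{not} suffice; your final paragraph notices but does not resolve this.

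The paper's proof avoids both problems by working instead with the Dirichlet approximation $b/r$ to $\alpha$, setting $Q=\max\{r,P^k|r\alp-b|\}$, and observing that then $\alp\in\grN(Q)\subseteq\grm(Q/2)$ \emph{by construction}---this is the one place where membership in a minor arc is guaranteed. Lemma~\ref{lemma3.4} yields $f(\alp)\ll PQ^{-\ome}$. The comparison with a general $c/t$ satisfying $|\alp-c/t|\le 1/t^2$ is then made via the triangle inequality, giving $Q\ge\tfrac12\min\{t,P^k/t\}$ and hence $f(\alp)\ll P(t^{-1}+tP^{-k})^\ome$. Finally, a standard transference principle (the paper cites \cite[Lemma~14.1]{Woo2015b}) upgrades this to the stated bound with $\lam=q+P^k|q\alp-a|$ for an arbitrary reduced $a/q$. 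Both the passage through the Dirichlet approximant and the transference step are missing from your outline, and they are exactly where the real work lies.
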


\begin{proof} We begin by establishing the superficially weaker assertion that, whenever 
$\alp\in \dbR$, $c\in \dbZ$ and $t\in \dbN$ satisfy $(c,t)=1$ and $|\alp -c/t|\le 1/t^2$, then
\begin{equation}\label{3.13}
f(\alp)\ll P(t^{-1}+tP^{-k})^\ome .
\end{equation}
From this assertion, it follows via a standard transference principle (see for example 
\cite[Lemma 14.1]{Woo2015b}) that the conclusion of the lemma holds.\par

Suppose then that $c$ and $t$ satisfy the relations $(c,t)=1$ and $|\alp -c/t|\le 1/t^2$. We 
apply Dirichlet's approximation theorem. Thus, there exist $b\in \dbZ$ and $r\in \dbN$ with 
$(b,r)=1$ satisfying $1\le r\le P^{k/2}$ and $|r\alp-b|\le P^{-k/2}$. We now put
\[
Q=\max \{ r, P^k|r\alp -b|\} \le P^{k/2},
\]
so that $r\le Q$ and $|r\alp -b|\le QP^{-k}$, and either $r\ge Q$ or 
$|r\alp -b|\ge QP^{-k}$. Thus $\alp \in \grM(Q)\setminus \grM(Q/2)\subseteq \grm(Q/2)$, 
and it follows from Lemma \ref{lemma3.4} that
\begin{equation}\label{3.14}
f(\alp)\ll PQ^{-\ome}.
\end{equation}
When $c/t\ne b/r$, it follows from the triangle inequality that
\[
\frac{1}{tr}\le \Bigl| \frac{c}{t}-\frac{b}{r}\Bigr| \le \Bigl| \alp -\frac{c}{t}\Bigr| 
+\Bigl| \alp - \frac{b}{r}\Bigr| \le \frac{1}{t^2}+\frac{Q}{rP^k},
\]
whence
\[
1\le \frac{r}{t}+\frac{tQ}{P^k}.
\]
Thus, we have either $r\ge \tfrac{1}{2}t$ or $Q\ge \tfrac{1}{2}P^k/t$. When instead 
$c/t=b/r$, we have $b=c$ and $r=t$, and the same conclusion holds. In either case, 
therefore, we find that $Q=\max\{ r,P^k|r\alp -b|\}\ge \tfrac{1}{2}\min \{ t,P^k/t\}$. 
Thus, we infer from \eqref{3.14} that
\[
f(\alp)\ll P\left( t^{-\ome}+(P^k/t)^{-\ome}\right) \ll P( t^{-1}+tP^{-k})^\ome.
\]
Thus the desired conclusion \eqref{3.13} follows, and the proof of the theorem is complete.
\end{proof}

\section{The minor arc contribution for ascending powers} We now address the 
representation problem \eqref{2.2} and adopt the notation of \S2. In situations wherein 
$k_2$ may be substantially larger than $k_1$, we apply a Weyl-type estimate only for the 
exponential sum $f_1(\alp)$, estimating the remaining ones in mean. Put
\begin{equation}\label{4.1}
F_1(\alp)=f_1(\alp),\quad G_1(\alp)=f_2(\alp)f_3(\alp)\cdots f_s(\alp),
\end{equation}
and note that in view of \eqref{2.1}, \eqref{2.3} and \eqref{2.4}, one has 
$F_1(0)G_1(0)\asymp n^\tet$. We take $Q$ to be a parameter with 
$1\le Q\le n^{1/2}$ and define a Hardy-Littlewood dissection in accordance with that 
introduced in \S3. Thus, the major arcs $\grM(Q)$ are defined to be the union of the sets
\[
\grM(q,a;Q)=\{ \alp\in [0,1):|q\alp-a|\le Qn^{-1}\},
\]
with $0\le a\le q\le Q$ and $(a,q)=1$, and the associated set of minor arcs are defined by 
setting $\grm(Q)=[0,1)\setminus \grM(Q)$. Also, we put 
$\grN(Q)=\grM(Q)\setminus \grM(Q/2)$. Note that $\grN(Q)\subseteq \grm(Q/2)$. Since
$n=P_i^{k_i}$ for each $i$, these definitions align with those of \S3 when considering the 
smooth Weyl sum $f_i(\alp)$.\par

We begin by recording a Weyl-type estimate for $F_1(\alp)$.

\begin{lemma}\label{lemma4.1}
When $1\le Q\le n^{1/2}$, one has the bound
\[
\sup_{\alp\in \grm(Q)}|F_1(\alp)|\ll F_1(0)Q^{-1/(Dk_1^2)}.
\]
\end{lemma}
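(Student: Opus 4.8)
The plan is to deduce Lemma~\ref{lemma4.1} directly from Lemma~\ref{lemma3.4} applied with $k=k_1$ and $P=P_1$. The only subtlety is that Lemma~\ref{lemma3.4} is phrased with respect to the Hardy--Littlewood dissection of~\S3, in which the major arcs $\grM(q,a;Q)$ are defined via the condition $|q\alp-a|\le QP_1^{-k_1}$, whereas the dissection introduced at the beginning of~\S4 uses the condition $|q\alp-a|\le Qn^{-1}$. Since $P_1=n^{1/k_1}$, we have $P_1^{k_1}=n$, so these two conditions coincide verbatim: the set $\grm(Q)$ appearing in the statement of Lemma~\ref{lemma4.1} is literally the same set as the minor arcs $\grm(Q)$ of~\S3 with $k=k_1$ and $P=P_1$. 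This matching is exactly the content of the sentence ``these definitions align with those of~\S3 when considering the smooth Weyl sum $f_i(\alp)$'' in the paragraph preceding the lemma.

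With the dissections identified, I would first check that the hypotheses of Lemma~\ref{lemma3.4} are met: one needs $k_1\ge 2$, which holds by the standing assumption $2\le k_1\le k_2\le\ldots$, and one needs $1\le Q\le P_1^{k_1/2}=n^{1/2}$, which is precisely the range of $Q$ stipulated in Lemma~\ref{lemma4.1}. One also needs $R\le P_1^\eta$ for the implicit $\eta$; this is guaranteed because $F_1(\alp)=f_1(\alp)=f(\alp;P_1,P_1^\eta)$ by~\eqref{2.4} and~\eqref{4.1}, with $\eta$ already chosen sufficiently small in terms of $s$ and $k_1,\ldots,k_s$ (hence in particular in terms of $k_1$). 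Thus $F_1(\alp)=f(\alp;P_1,R)$ in the notation of~\S3, and Lemma~\ref{lemma3.4} yields
\[
\sup_{\alp\in\grm(Q)}|F_1(\alp)|\ll P_1Q^{-\ome(k_1)}=P_1Q^{-1/(Dk_1^2)},
\]
on recalling the definition~\eqref{3.12} of $\ome$.

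It remains only to replace $P_1$ by $F_1(0)$ on the right-hand side. By~\eqref{2.4} we have $F_1(0)=f_1(0)=\mathrm{card}(\mathscr A(P_1,P_1^\eta))$, and the standard theory of smooth numbers recorded in~\S2 (following \cite[Lemma~12.1]{Vau1997}) gives $\mathrm{card}(\mathscr A(P_1,P_1^\eta))\asymp P_1$; in particular $P_1\ll F_1(0)$. Substituting this bound gives $\sup_{\alp\in\grm(Q)}|F_1(\alp)|\ll F_1(0)Q^{-1/(Dk_1^2)}$, which is the assertion of the lemma. I do not anticipate any genuine obstacle here: the lemma is essentially a repackaging of Lemma~\ref{lemma3.4} in the notation of~\S4, the only things to verify being the coincidence of the two dissections (immediate from $P_1^{k_1}=n$) and the cosmetic replacement of $P_1$ by the comparable quantity $F_1(0)$. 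If anything deserves a word of care, it is making explicit that the parameter $\eta$ fixed in~\S2 is small enough for the ``$R$-notation'' conventions of~\S3 to apply, but this was already arranged in the setup of~\S2.
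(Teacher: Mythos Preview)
Your proposal is correct and follows exactly the paper's approach: the paper's proof reads ``In view of \eqref{4.1}, this estimate is immediate from Lemma~\ref{lemma3.4}'', and you have simply unpacked the word ``immediate'' by verifying that the dissections of \S3 and \S4 coincide (since $P_1^{k_1}=n$) and that $P_1\asymp F_1(0)$.
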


\begin{proof} In view of \eqref{4.1}, this estimate is immediate from Lemma 
\ref{lemma3.4}.
\end{proof}

The mean value estimate that we obtain for $G_1(\alp)$ depends on admissible exponent 
bounds. Here we note that, whenever $v$ is even, the corollary to 
\cite[Theorem 2.1]{Woo1993} shows that the exponent $\Del_v$ is admissible for $k\ge 4$, 
where $\Del_v$ is the unique positive solution of the equation
\begin{equation}\label{4.2}
\Del_ve^{\Del_v/k}=ke^{1-v/k}.
\end{equation}
When $k$ is equal to $2$ or $3$, the admissible exponents available from Hua's lemma 
show that the real numbers $\Del_v$ defined via \eqref{4.2} are admissible. Of course, 
much sharper estimates are known in these cases (see \cite{Woo2015a} for the sharpest 
available conclusions when $k=3$). We note that the exponent $\Del_s$ in \cite{Woo1993} 
corresponds to our $\Del_v$ with $v=2s$, owing to the slightly different definition employed 
therein.\par

We next provide an upper bound for the mean value of $|G_1(\alp)|$ over the intermediate 
set of arcs $\grN(Q)$. In this context, it is convenient to introduce the quantity
\begin{equation}\label{4.3}
\Phi_1=\sum_{i=2}^s\frac{1}{k_i}.
\end{equation}

\begin{lemma}\label{lemma4.2}
When $1\le Q\le n^{1/2}$, one has the bound
\[
\int_{\grN(Q)}|G_1(\alp)|\d\alp \ll G_1(0)n^{-1}Q^{2\Tet_1},
\]
where $\Tet_1=e^{1-\Phi_1+2/k_2}$.
\end{lemma}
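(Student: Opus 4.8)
The plan is to separate the factors of $G_1(\alp)=f_2(\alp)\cdots f_s(\alp)$ by H\"older's inequality, to estimate each of the resulting mean values by the major arc bound of Lemma \ref{lemma3.2}, and then to substitute the explicit admissible exponents furnished by \eqref{4.2}. I first record the relevant normalisations: by \eqref{2.3}, \eqref{2.4} and \eqref{4.3} one has $P_i=n^{1/k_i}$, $f_i(0)\asymp P_i$ and $G_1(0)\asymp n^{\Phi_1}$; moreover, since $n=P_i^{k_i}$, the arcs $\grM(Q)$ and $\grN(Q)$ of \S4 coincide with the arcs of \S3 attached to the exponent $k_i$, so that $\grN(Q)\subseteq \grM(Q)$ and Lemma \ref{lemma3.2} is available for each $f_i$ separately. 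One may suppose that $\Phi_1$ is large, say $\Phi_1\ge 4$: when $\Phi_1\le 1+2/k_2$ one has $\Tet_1\ge 1$, so the conclusion is immediate from $|G_1(\alp)|\le G_1(0)$ together with $\text{mes}(\grN(Q))\ll Q^2n^{-1}$, while the residual intermediate range may be treated via the relaxation of the hypothesis $s\ge 4k$ alluded to in the proof of Lemma \ref{lemma3.2}.

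I would then apply H\"older's inequality with the specific weights $w_i$ $(2\le i\le s)$ determined by $w_i^{-1}=(k_i\Phi_1)^{-1}$, so that $\sum_{i=2}^sw_i^{-1}=1$ and $w_i=k_i\Phi_1\ge 4k_i$. Let $v_i$ denote the largest even integer with $v_i\le w_i$, so that $w_i-2<v_i\le w_i$; by the discussion around \eqref{4.2}, the number $\Del_{v_i}=\Del_{v_i}(k_i)$ defined there is an admissible exponent for $k_i$, and hence, since $w_i\ge v_i$, it also serves as an admissible exponent at the index $w_i$. Lemma \ref{lemma3.2} (applied with $P=P_i$ and $k=k_i$) therefore gives
\[
\int_{\grN(Q)}|f_i(\alp)|^{w_i}\d\alp \le \int_{\grM(Q)}|f_i(\alp)|^{w_i}\d\alp \ll P_i^{w_i-k_i}Q^{\eps+2\Del_{v_i}/k_i}.
\]
On reassembling these estimates through H\"older's inequality, the powers of the $P_i$ combine neatly, since $\prod_{i=2}^sP_i^{1-k_i/w_i}=\prod_{i=2}^sP_i^{1-1/\Phi_1}=n^{(1-1/\Phi_1)\Phi_1}=n^{\Phi_1-1}\asymp G_1(0)n^{-1}$, while the exponent of $Q$ becomes $\eps+\sum_{i=2}^s2\Del_{v_i}/(k_i^2\Phi_1)$.

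To complete the argument I would bound the exponents $\Del_{v_i}$. From $v_i>w_i-2=k_i\Phi_1-2$ one obtains $v_i/k_i>\Phi_1-2/k_i$, and hence \eqref{4.2} yields $\Del_{v_i}e^{\Del_{v_i}/k_i}=k_ie^{1-v_i/k_i}<k_ie^{1-\Phi_1+2/k_i}\le k_ie^{1-\Phi_1+2/k_2}$, the last step using $k_i\ge k_2$ for $i\ge 2$; since $\Del_{v_i}\ge 0$ this forces $\Del_{v_i}<k_ie^{1-\Phi_1+2/k_2}$. Consequently
\[
\sum_{i=2}^s\frac{2\Del_{v_i}}{k_i^2\Phi_1}<\frac{2e^{1-\Phi_1+2/k_2}}{\Phi_1}\sum_{i=2}^s\frac{1}{k_i}=2e^{1-\Phi_1+2/k_2}=2\Tet_1,
\]
and the strictness of this inequality leaves a positive margin, depending only on $\Phi_1$ and $k_2$, into which the auxiliary parameter $\eps$ may be absorbed (using $Q\ge 1$). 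This delivers the asserted bound $\int_{\grN(Q)}|G_1(\alp)|\d\alp\ll G_1(0)n^{-1}Q^{2\Tet_1}$.

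The one point requiring genuine care is the bookkeeping in the H\"older step. The weighting is essentially forced: only the choice $w_i=k_i\Phi_1$ causes the individual contributions $P_i^{1-k_i/w_i}$ to recombine to precisely $n^{\Phi_1-1}$, and one must simultaneously honour the constraint $w_i\ge 4k_i$ of Lemma \ref{lemma3.2} and account for the loss incurred on passing from $w_i$ to the nearby even integer $v_i$ at which \eqref{4.2} provides an admissible exponent. The narrow slack $2/k_2$ built into $\Tet_1$ is exactly what the sensitivity of the relation \eqref{4.2} to that rounding --- and the accompanying $\eps$ --- calls for, and confirming that it is enough is the crux.
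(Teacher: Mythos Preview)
Your approach is essentially the same as the paper's: apply H\"older with weights $t_i=k_i\Phi_1$, invoke Lemma \ref{lemma3.2} on each factor, bound the admissible exponents via \eqref{4.2} after passing to the nearest even integer below $t_i$, and use $k_i\ge k_2$ to replace each $2/k_i$ by $2/k_2$ before summing. The paper carries this out in the same order with the same weights and the same estimate $\Del_{t_i}<k_ie^{1-(t_i-2)/k_i}$; your extra remarks on the $\Phi_1\ge 4$ threshold (needed for the hypothesis $s\ge 4k$ of Lemma \ref{lemma3.2}) and on absorbing the auxiliary $\eps$ into the strict inequality are points the paper leaves implicit but which are handled just as you indicate.
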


\begin{proof} Define the exponents
\begin{equation}\label{4.4}
t_i=k_i\Phi_1\quad (2\le i\le s).
\end{equation}
Then it follows from \eqref{4.3} that we have
\[
\sum_{i=2}^s\frac{1}{t_i}=\frac{1}{\Phi_1}\sum_{i=2}^s\frac{1}{k_i}=1,
\]
and hence an application of H\"older's inequality leads us from \eqref{4.1} to the bound
\begin{equation}\label{4.5}
\int_{\grN(Q)}|G_1(\alp)|\d\alp \le \prod_{i=2}^s I_i^{1/t_i},
\end{equation}
where
\begin{equation}\label{4.6}
I_i=\int_{\grN(Q)}|f_i(\alp)|^{t_i}\d\alp .
\end{equation}

\par For each index $i$, the largest even integer not exceeding $t_i$ is larger than $t_i-2$, 
and hence it follows from \eqref{4.2} that there is an exponent $\Del_{t_i}$ admissible for 
$k_i$ with
\[
\Del_{t_i}<k_ie^{1-(t_i-2)/k_i}.
\]
Since $\grN(Q)\subseteq \grM(Q)$, we find from \eqref{4.6} via Lemma \ref{lemma3.2} 
that
\[
I_i\ll P_i^{t_i-k_i}Q^{\eps+2\Del_{t_i}/k_i}.
\]
Thus, in view of \eqref{2.3} and \eqref{4.4}, we have
\begin{equation}\label{4.7}
I_i\ll P_i^{t_i}n^{-1}Q^{2\del_i}\quad (2\le i\le s),
\end{equation}
where
\[
\del_i=e^{1-\Phi_1+2/k_i}.
\]
On substituting \eqref{4.7} into \eqref{4.5}, we conclude that
\begin{equation}\label{4.8}
\int_{\grN(Q)}|G_1(\alp)|\d\alp \ll G_1(0)n^{-1}Q^{2\Ome_1},
\end{equation}
where
\[
\Ome_1=e^{1-\Phi_1+2/k_2}\sum_{i=2}^s\frac{1}{t_i}=e^{1-\Phi_1+2/k_2}=\Tet_1.
\]
The conclusion of the lemma is therefore immediate from \eqref{4.8}.
\end{proof} 

By combining the conclusions of Lemmata \ref{lemma4.1} and \ref{lemma4.2}, we obtain a 
minor arc estimate sufficient for our proof of Theorem \ref{theorem1.1}. Here and 
henceforth, we fix $\eta$ to be a positive number sufficiently small in terms of 
$k_1,k_2,\ldots ,k_s$, and $\eps$, in the context of the estimates of this and the previous 
section relevant for the various admissible exponents encountered. Also, we recall the 
notation of writing $L$ for $\log n$.

\begin{lemma}\label{lemma4.3}
Suppose that
\begin{equation}\label{4.9}
\sum_{i=2}^s\frac{1}{k_i}>2\log k_1+\frac{2}{k_2}+1+\log (2D).
\end{equation}
Then there is a positive number $\del$ having the property that
\[
\int_\grk |\mathscr F(\alp)|\d\alp \ll n^{\tet-1}L^{-\del}.
\]
\end{lemma}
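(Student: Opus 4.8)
The plan is to bound the minor arc integral $\int_\grk|\mathscr F(\alp)|\,\d\alp$ by combining the pointwise Weyl-type estimate for $F_1$ from Lemma \ref{lemma4.1} with the mean value bound for $G_1$ from Lemma \ref{lemma4.2}, after first performing a dyadic dissection of $\grk$ according to the ``denominator size'' parameter $Q$. First I would recall from \S2 that the minor arcs $\grk$ complementary to the major arcs $\grK$ consist of those $\alp$ for which the Dirichlet approximation $a/q$ has denominator $q$ (together with the quantity $q+n|q\alp-a|$) exceeding $L^{1/15}$; more precisely, since $[0,1)=\grM(n^{1/2})$ by Dirichlet, one has $\grk\subseteq \bigcup \grN(2^{-l}n^{1/2})$ where the union runs over $l\ge 0$ with $2^{-l}n^{1/2}\ge c L^{1/15}$ for a suitable constant $c$. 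On each piece $\grN(Q)$ with $Q$ in this range we have $\grN(Q)\subseteq \grm(Q/2)$, so Lemma \ref{lemma4.1} gives $\sup_{\alp\in\grN(Q)}|F_1(\alp)|\ll F_1(0)Q^{-1/(Dk_1^2)}$.

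Next I would estimate, for each dyadic level $Q$,
\[
\int_{\grN(Q)}|\mathscr F(\alp)|\,\d\alp \le \Bigl(\sup_{\alp\in\grN(Q)}|F_1(\alp)|\Bigr)\int_{\grN(Q)}|G_1(\alp)|\,\d\alp \ll F_1(0)G_1(0)n^{-1}Q^{2\Tet_1-1/(Dk_1^2)}.
\]
Since $F_1(0)G_1(0)\asymp n^\tet$, the exponent of $Q$ here is the crux: I want $2\Tet_1 - 1/(Dk_1^2) < 0$, i.e. $2e^{1-\Phi_1+2/k_2} < 1/(Dk_1^2)$, which rearranges (taking logarithms) to $\Phi_1 > 2\log k_1 + 2/k_2 + 1 + \log(2D)$ — precisely hypothesis \eqref{4.9}. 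Granting this, write $-\beta$ for the (negative) exponent $2\Tet_1-1/(Dk_1^2)$, with $\beta>0$. Then $\int_{\grN(Q)}|\mathscr F|\,\d\alp \ll n^{\tet-1}Q^{-\beta}$, and summing the geometric series over the dyadic values $Q=2^{-l}n^{1/2}\ge cL^{1/15}$ the sum is dominated by its smallest term, giving
\[
\int_\grk|\mathscr F(\alp)|\,\d\alp \ll n^{\tet-1}(L^{1/15})^{-\beta} = n^{\tet-1}L^{-\del},
\]
with $\del = \beta/15 > 0$, which is the desired conclusion. (One must also absorb the harmless $Q^\eps$ losses hidden in Lemma \ref{lemma4.2} and any constant factors such as the $c$ above; since $\eta$ and $\eps$ are taken sufficiently small, and $L^\eps$ is a power of a logarithm while the saving $L^{\beta/15}$ is a fixed positive power, this causes no difficulty provided we choose $\del$ slightly smaller, say $\del=\beta/16$.)

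The main obstacle is not any single estimate but the bookkeeping that shows the dyadic decomposition of $\grk$ really is covered by the sets $\grN(Q)$ over the stated range of $Q$, and that the threshold $Q\gg L^{1/15}$ is exactly what the major-arc definition in \S2 enforces — one must check that the Hardy-Littlewood dissection of \S2 (with modulus bound $L^{1/15}$ and width $L^{1/15}n^{-1}$) is compatible with the $\grM(Q)$, $\grm(Q)$, $\grN(Q)$ notation of \S3 and \S4, which the remark at the end of \S4 (``these definitions align with those of \S3'') licenses. Once that alignment is in hand, the rest is the routine combination of a sup bound on one factor with an $L^1$ bound on the complementary product, followed by summing a geometric series, and the only place where the hypotheses of the lemma genuinely enter is in forcing the exponent $2\Tet_1-1/(Dk_1^2)$ to be negative.
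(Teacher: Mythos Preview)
Your proposal is correct and follows essentially the same route as the paper: dyadically dissect $\grk\subset\grm(L^{1/15})$ into pieces $\grN(Q)$, combine the sup bound of Lemma~\ref{lemma4.1} with the mean-value bound of Lemma~\ref{lemma4.2}, observe that hypothesis \eqref{4.9} is exactly the condition $2\Tet_1<1/(Dk_1^2)$, and sum the resulting geometric series in $Q$. One small slip: the geometric series is dominated by the term with \emph{smallest} $Q$ (hence largest $Q^{-\beta}$), not by its ``smallest term''; and note that Lemma~\ref{lemma4.2} as stated carries no $Q^\eps$ factor, so your final caveat about absorbing $\eps$-losses is unnecessary.
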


\begin{proof} By referring to the definition of $\grk$ in \S2, we see that $\grk\subset 
\grm(L^{1/15})$. When $L^{1/15}\le Q\le n^{1/2}$, it follows from Lemmata 
\ref{lemma4.1} and \ref{lemma4.2} that
\begin{align*}
\int_{\grN(Q)}|F_1(\alp)G_1(\alp)|\d\alp &\le \Bigl( \sup_{\alp \in \grN(Q)}|F_1(\alp)|\Bigr) 
\int_{\grN(Q)}|G_1(\alp)|\d\alp \\
&\ll F_1(0)Q^{-1/(Dk_1^2)}G_1(0)n^{-1}Q^{2\Tet_1}.
\end{align*}
Provided that the hypothesis \eqref{4.9} holds, it follows from \eqref{4.3} that
\[
e^{\Phi_1}>2e^{1+2/k_2}Dk_1^2,
\]
whence $2\Tet_1<1/(Dk_1^2)$. Put
\[
\del=\frac{1}{15}\Bigl(\frac{1}{Dk_1^2}-2\Tet_1\Bigr).
\]
Then, on recalling \eqref{2.5}, we may conclude thus far that
\[
\int_{\grN(Q)}|\mathscr F(\alp)|\d\alp \ll n^{\tet-1}Q^{-15\del}.
\]
But $\grk$ is covered by the sets $\grN(Q)$ via a dyadic dissection, and we see that
\begin{align*}
\int_\grk |\mathscr F(\alp)|\d\alp &\le
\sum_{\substack{l=0\\ 2^l\le n^{1/2}L^{-1/15}}}^\infty \int_{\grN(2^{-l}n^{1/2})}
|\mathscr F(\alp)|\d\alp \\
&\ll n^{\tet-1}\sum_{\substack{l=0\\ 2^l\le n^{1/2}L^{-1/15}}}^\infty 
(2^{-l}n^{1/2})^{-15\del}\\
&\ll n^{\tet-1}L^{-\del}.
\end{align*}
\end{proof}

We complete this section by addressing the particular situation relevant to the second 
conclusion of Theorem \ref{theorem1.3}.

\begin{corollary}\label{corollary4.4}
Suppose that $k$ and $r$ are natural numbers with $r\ge k\ge 2$, and put $k_i=k+r(i-1)$ 
$(1\le i\le s)$. Then, provided that $s\ge (6k+6)^{2r}$, there is a positive number $\del$ 
having the property that
\[
\int_\grk |\mathscr F(\alp)|\d\alp \ll n^{\tet-1}L^{-\del}.
\]
\end{corollary}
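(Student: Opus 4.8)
The plan is to reduce Corollary \ref{corollary4.4} to Lemma \ref{lemma4.3} by verifying that, under the stated arithmetic-progression hypotheses, the condition \eqref{4.9} is satisfied whenever $s\ge (6k+6)^{2r}$. First I would record the explicit form of the quantities appearing in \eqref{4.9}. With $k_i=k+r(i-1)$ and $r\ge k\ge 2$, one has $k_1=k$, $k_2=k+r$, and $\log k_1=\log k$. The left-hand side of \eqref{4.9} is the tail sum $\Phi_1=\sum_{i=2}^s k_i^{-1}=\sum_{i=2}^s (k+r(i-1))^{-1}$, which I would bound below by comparison with an integral: $\Phi_1\ge \int_2^{s+1}(k+r(t-1))^{-1}\,\d t = r^{-1}\log\bigl((k+rs)/(k+r)\bigr)$. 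Since $k\le r$ this is at least $r^{-1}\log\bigl((rs)/(2r)\bigr)=r^{-1}\log(s/2)$, and with $s\ge (6k+6)^{2r}$ we get $\Phi_1\ge r^{-1}\bigl(2r\log(6k+6)-\log 2\bigr)=2\log(6k+6)-r^{-1}\log 2$.

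Next I would estimate the right-hand side of \eqref{4.9}, namely $2\log k+2/k_2+1+\log(2D)$. Here $2/k_2=2/(k+r)\le 2/(2k)=1/k\le 1/2$, and $D=4.51396$, so $\log(2D)=\log(9.02792)<2.2006$; hence the right-hand side is at most $2\log k+0.5+1+2.2006=2\log k+3.7006$. It therefore suffices to show
\[
2\log(6k+6)-\tfrac{1}{r}\log 2 > 2\log k+3.7006,
\]
that is, $2\log\bigl((6k+6)/k\bigr)>3.7006+r^{-1}\log 2$. Since $(6k+6)/k=6+6/k\ge 6$ for all $k\ge 2$, the left-hand side is at least $2\log 6=3.5835$, which is unfortunately \emph{not} quite enough; this indicates that the crude integral bound on $\Phi_1$ loses too much at the lower endpoint $i=2$, and the argument must be tightened there. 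The fix is to peel off the first term of the sum separately: write $\Phi_1 = k_2^{-1}+\sum_{i=3}^s k_i^{-1}\ge (k+r)^{-1}+r^{-1}\log\bigl((k+rs)/(k+2r)\bigr)$, and then $k+rs\ge rs$ while $k+2r\le 3r$, giving $\Phi_1\ge (k+r)^{-1}+r^{-1}\log(s/3)\ge r^{-1}\bigl(2r\log(6k+6)-\log 3\bigr)$, so the needed inequality becomes $2\log(6+6/k)>3.7006+r^{-1}\log 3$. For $k\ge 2$ and $r\ge k\ge 2$ we have $r^{-1}\log 3\le (\log 3)/2<0.5494$, so it suffices that $2\log(6+6/k)>4.25$, i.e. $\log(6+6/k)>2.125$, i.e. $6+6/k>8.374$, i.e. $6/k>2.374$, which holds precisely when $k\le 2$. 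For larger $k$ one must instead exploit that $6k+6$ grows while the loss terms stay bounded: since $2\log(6k+6)-2\log k = 2\log(6+6/k)$ is \emph{decreasing} in $k$ but $2/k_2\le 1/k$ is also decreasing, I would treat $k=2$ and $k=3$ by the above explicit check and, for $k\ge 4$, note that $2\log(6k+6)-2\log k\ge 2\log 6>3.583$ is offset against a right-hand side $2\log k+2/(k+r)+1+\log(2D)$ in which $2/(k+r)\le 1/4$; the shortfall of at most $0.12$ is then absorbed by observing that for $k\ge 4$ the summand count $s\ge(6k+6)^{2r}$ is enormously larger than the minimum needed, so replacing $s/3$ by, say, $s^{1/2}$ in the integral bound still leaves $\Phi_1\ge \log(6k+6)-O(1/r)$ with room to spare. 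In the writeup I would simply choose the grouping and constants so that the inequality \eqref{4.9} is verified in one line after substituting $s=(6k+6)^{2r}$.

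Once \eqref{4.9} is confirmed for all $k\ge 2$, $r\ge k$ and $s\ge (6k+6)^{2r}$, the conclusion of the corollary — the existence of $\del>0$ with $\int_\grk|\mathscr F(\alp)|\,\d\alp\ll n^{\tet-1}L^{-\del}$ — is an immediate invocation of Lemma \ref{lemma4.3}, since the hypotheses of that lemma are exactly \eqref{4.9} and the setup of \S2, both of which are now in force with $k_i=k+r(i-1)$.

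The main obstacle is purely the elementary inequality bookkeeping: one must bound the tail sum $\Phi_1=\sum_{i=2}^s k_i^{-1}$ from below tightly enough — in particular not discarding too much near $i=2$ — while bounding $2\log k_1+2/k_2+1+\log(2D)$ from above, and confirm that the gap is positive for \emph{every} pair $(k,r)$ with $r\ge k\ge 2$. The delicate regime is small $k$ (where $6/k$ is largest, which helps, but also where the integral comparison for $\Phi_1$ is least efficient), so the cleanest route is to isolate $k\in\{2,3\}$ for a direct numerical check and handle $k\ge 4$ uniformly; the exponent $2r$ in $(6k+6)^{2r}$ is comfortably larger than what the inequality demands, so once the right grouping is chosen there is ample slack.
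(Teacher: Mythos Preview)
Your overall strategy matches the paper's exactly: reduce to Lemma~\ref{lemma4.3} by combining the integral comparison
\[
\sum_{i=2}^s\frac{1}{k+r(i-1)}>\int_1^s\frac{\d t}{k+rt}=\frac{1}{r}\log\Bigl(\frac{k+rs}{k+r}\Bigr)
\]
with the hypothesis $s\ge(6k+6)^{2r}$ to force \eqref{4.9}. However, your execution of the elementary inequality does not close as written. The trouble is that you bound $2/(k+r)\le 1/2$ and then compare $2\log(6+6/k)$ against a target of roughly $3.70$; since $2\log(6+6/k)\to 2\log 6\approx 3.58$ as $k\to\infty$, this fails for all large $k$. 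Your proposed patch (``replacing $s/3$ by $s^{1/2}$'') goes in the wrong direction --- it makes the lower bound on $\Phi_1$ \emph{smaller}, not larger --- and the case split on $k\in\{2,3\}$ versus $k\ge 4$ therefore does not suffice as stated. The promised ``one line'' is never exhibited.

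The paper sidesteps these numerical contortions with a single absorption trick. After exponentiating, condition \eqref{4.9} becomes $k+rs\ge (k+r)(2eDk^2)^r e^{2r/(k+r)}$, and the hypothesis $r\ge k$ gives $1/(k+r)\le 1/(2k)<\log(1+1/k)$, whence $e^{1/(k+r)}<1+1/k$ and so $k^{2r}e^{2r/(k+r)}\le(k+1)^{2r}$. Thus the awkward term $2/(k+r)$ is absorbed into the passage from $k^2$ to $(k+1)^2$, and one only needs $s\ge 2(2eD)^r(k+1)^{2r}$; since $2^{1+1/r}eD<36$ for $r\ge 2$, the bound $s\ge(6k+6)^{2r}=36^r(k+1)^{2r}$ suffices. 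Your route can also be salvaged without this trick, simply by retaining the sharper estimate $2/(k+r)\le 1/k$ (rather than $1/2$) throughout: then, using $r\ge k$, both the loss $r^{-1}\log 3$ and the term $2/(k+r)$ are $O(1/k)$, and the required inequality reduces to $2\log(6+6/k)>1+\log(2D)+O(1/k)$, which holds uniformly in $k\ge 2$ because $2\log 6-1-\log(2D)>0.38$.
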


\begin{proof} We apply Lemma \ref{lemma4.3}, observing that by a familiar argument one 
has
\[
\sum_{i=2}^s\frac{1}{k+r(i-1)}>\int_1^s\frac{\d t}{k+rt}=\frac{1}{r}\log 
\biggl( \frac{k+rs}{k+r}\biggr) .
\]
Thus, provided that one has
\begin{equation}\label{4.10}
\frac{1}{r}\log \biggl( \frac{k+rs}{k+r}\biggr) \ge 2\log k+\frac{2}{k+r}+1+\log (2D),
\end{equation}
then the conclusion of the corollary follows from Lemma \ref{lemma4.3}. We observe that 
the hypothesis $r\ge k$ ensures that
\[
\frac{1}{k+r}\le \frac{1}{2k}<\frac{1}{k}-\frac{1}{2k^2}<\log \Bigl( 1+\frac{1}{k}\Bigr),
\]
and hence $e^{1/(k+r)}<1+1/k$. It follows that the lower bound \eqref{4.10} is satisfied 
provided that
\[
k+rs\ge (2eD(k+1)^2)^r(k+r).
\]
Thus, on recalling that $k\le r$, we conclude that the lower bound \eqref{4.10} holds 
whenever $s\ge 2(2eD)^r(k+1)^{2r}$. In view of \eqref{3.12}, however, a modicum of 
computation confirms the bound $2^{1+1/r}eD<35$ for $r\ge 2$, and hence the upper 
bound asserted in the corollary holds whenever $s\ge (6k+6)^{2r}$, completing the proof. 
\end{proof}

\section{An enhanced minor arc estimate} Given exponents $k_i$ $(i\ge 1)$ having the 
property that $k_i$ is small for numerous small indices $i$, one may sharpen the analysis 
of the minor arcs presented in the previous section. We illustrate the underlying strategy 
in this section with a consideration of the situation in which
\[
k_i=k+r(i-1)\quad (1\le i\le s),
\]
with $r$ small. We now put
\begin{equation}\label{5.1}
F_2(\alp)=f_1(\alp)f_2(\alp)\cdots f_k(\alp)\quad \text{and}\quad 
G_2(\alp)=f_{k+1}(\alp)f_{k+2}(\alp)\cdots f_s(\alp).
\end{equation}
In accord with the discussion of the previous section, we have 
$F_2(0)G_2(0)\asymp n^\tet$. In all other respects, we adopt the notation of the previous 
section, with an analysis so similar that we are able to economise on detail.

\begin{lemma}\label{lemma5.1}
When $1\le Q\le n^{1/2}$, one has the bound
\[
\sup_{\alp \in \grm(Q)}|F_2(\alp)|\ll F_2(0)Q^{-1/(Dk(r+1))}.
\]
\end{lemma}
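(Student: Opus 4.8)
The plan is to reduce the bound for $F_2(\alp)$ to a single application of Lemma \ref{lemma3.4}, chosen for the smallest exponent $k_1=k$, while controlling the remaining factors $f_2,\ldots,f_k$ trivially. First I would note that $F_2(\alp)=f_1(\alp)f_2(\alp)\cdots f_k(\alp)$, and that for each $i$ with $2\le i\le k$ one has the trivial bound $|f_i(\alp)|\le f_i(0)\asymp P_i$ valid for all $\alp$. Thus on $\grm(Q)$ we obtain
\[
|F_2(\alp)|\le |f_1(\alp)|\prod_{i=2}^k f_i(0)\ll \Bigl(\sup_{\alp\in\grm(Q)}|f_1(\alp)|\Bigr)\prod_{i=2}^k P_i.
\]
Since $n=P_1^{k_1}$ and the dissection of \S4 aligns with that of \S3 for the exponent $k_1=k$, Lemma \ref{lemma3.4} applies with this value of $k$ and gives $\sup_{\alp\in\grm(Q)}|f_1(\alp)|\ll P_1 Q^{-\ome(k)}$, where $\ome(k)=1/(Dk^2)$.

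Next I would combine these: $|F_2(\alp)|\ll F_2(0)Q^{-\ome(k)}=F_2(0)Q^{-1/(Dk^2)}$. This is already a bound of the required shape, but with exponent $1/(Dk^2)$ rather than the claimed $1/(Dk(r+1))$. The point is that the claimed exponent is \emph{weaker} (smaller) precisely when $r+1\ge k$, i.e. when $r\ge k-1$; but in the regime of \S5 one has $r$ small, so in fact $1/(Dk(r+1))\ge 1/(Dk^2)$ whenever $r\le k-1$, and the stated bound would then be a genuine strengthening that the crude argument above does \emph{not} deliver. So the real content must be that one exploits \emph{several} of the small exponents $k_1,\dots,k_k$, not just $k_1$.

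The correct approach, then, is to distribute the Weyl-type savings across all $k$ factors via Hölder's inequality, exactly paralleling the proof of Lemma \ref{lemma4.2} but now using the pointwise bound of Lemma \ref{lemma3.4} (or rather Theorem \ref{theorem3.5}) in place of the mean-value input. Concretely, I would bound $|F_2(\alp)|\le \prod_{i=1}^k |f_i(\alp)|$ and apply Lemma \ref{lemma3.4} to \emph{each} $f_i(\alp)$ with its own exponent $k_i=k+r(i-1)$, obtaining $|f_i(\alp)|\ll P_i Q^{-1/(Dk_i^2)}$ on $\grm(Q)$. Multiplying these gives $|F_2(\alp)|\ll F_2(0)Q^{-E}$ where $E=\sum_{i=1}^k 1/(Dk_i^2)$. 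The main obstacle — and the only real calculation — is then to verify the elementary inequality
\[
E=\frac{1}{D}\sum_{i=1}^k\frac{1}{(k+r(i-1))^2}\ge \frac{1}{Dk(r+1)},
\]
i.e. $\sum_{i=1}^k (k+r(i-1))^{-2}\ge 1/(k(r+1))$. This follows by a convexity/integral comparison: since $k+r(i-1)\le k+r(k-1)<k(r+1)$ for each $i\le k$, each summand exceeds $1/(k(r+1))^2$... which only gives $E> k/(k(r+1))^2 = 1/(k(r+1)^2)$, slightly too weak. The sharper route is to note $(k+r(i-1))^{-2}\ge \int_{i-1}^{i}(k+rt)^{-2}\,\d t$, so $\sum_{i=1}^k (k+r(i-1))^{-2}\ge \int_0^k (k+rt)^{-2}\,\d t = \tfrac{1}{r}\bigl(\tfrac{1}{k}-\tfrac{1}{k+rk}\bigr)=\tfrac{1}{k(r+1)}$, which is exactly the required bound. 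Hence $E\ge 1/(Dk(r+1))$ and the lemma follows. I expect the Hölder bookkeeping to be routine and the integral comparison above to be the crux; no admissible-exponent input beyond Lemma \ref{lemma3.4} is needed.
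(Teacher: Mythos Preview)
Your final approach is correct and matches the paper's proof exactly: apply Lemma~\ref{lemma3.4} to each factor $f_i(\alp)$ for $1\le i\le k$, multiply the resulting pointwise bounds to obtain $|F_2(\alp)|\ll F_2(0)Q^{-\phi}$ with $\phi=D^{-1}\sum_{i=1}^k(k+r(i-1))^{-2}$, and then verify $\phi>1/(Dk(r+1))$ via the integral comparison $\sum_{i=1}^k(k+r(i-1))^{-2}>\int_0^k(k+rt)^{-2}\d t=1/(k(r+1))$. Note that no H\"older step is involved here---you are simply multiplying $k$ pointwise estimates, not combining mean values---so the reference to H\"older's inequality in your outline is superfluous.
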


\begin{proof} As a consequence of Lemma \ref{lemma3.4}, it follows from \eqref{5.1} that
\begin{equation}\label{5.2}
\sup_{\alp\in \grm(Q)}|F_2(\alp)|\ll \prod_{i=1}^k P_iQ^{-1/(Dk_i^2)}=F_2(0)Q^{-\phi},
\end{equation}
in which we put
\[
\phi=\sum_{i=1}^k\frac{1}{D(k+r(i-1))^2}.
\]
However, by applying a familiar lower bound, we find that
\[
\phi>\frac{1}{D}\int_0^k\frac{\d t}{(k+rt)^2}=\frac{1}{Dr}
\Bigl( \frac{1}{k}-\frac{1}{k(r+1)}\Bigr) =\frac{1}{Dk(r+1)}.
\]
The conclusion of the lemma follows by substituting this estimate into \eqref{5.2}.
\end{proof}
 
Our upper bound for the mean value of $|G_2(\alp)|$ over $\grN(Q)$ is obtained through 
a modification of the corresponding treatment of $G_1(\alp)$ in Lemma \ref{lemma4.2}. We 
now put
\begin{equation}\label{5.3}
\Phi_2=\sum_{i=k+1}^s\frac{1}{k_i}.
\end{equation}

\begin{lemma}\label{lemma5.2} When $1\le Q\le n^{1/2}$, one has the bound
\[
\int_{\grN(Q)}|G_2(\alp)|\d\alp \ll G_2(0)n^{-1}Q^{2\Tet_2},
\]
where $\Tet_2=e^{1-\Phi_2+2/(k(r+1))}$.
\end{lemma}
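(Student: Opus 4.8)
The plan is to mimic the proof of Lemma~\ref{lemma4.2} almost verbatim, the only change being that the H\"older dissection now runs over the indices $k+1\le i\le s$ rather than $2\le i\le s$, and the governing quantity is $\Phi_2$ in place of $\Phi_1$. First I would introduce the exponents $t_i=k_i\Phi_2$ for $k+1\le i\le s$, so that by the definition \eqref{5.3} one has $\sum_{i=k+1}^s t_i^{-1}=1$. An application of H\"older's inequality then yields
\[
\int_{\grN(Q)}|G_2(\alp)|\d\alp \le \prod_{i=k+1}^s I_i^{1/t_i},\quad \text{where}\quad I_i=\int_{\grN(Q)}|f_i(\alp)|^{t_i}\d\alp .
\]

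Next I would estimate each $I_i$. For each index $i$ the largest even integer not exceeding $t_i$ exceeds $t_i-2$, so the admissible exponent $\Del_{t_i}$ furnished by \eqref{4.2} satisfies $\Del_{t_i}<k_ie^{1-(t_i-2)/k_i}$. Since $\grN(Q)\subseteq \grM(Q)$, Lemma~\ref{lemma3.2} gives $I_i\ll P_i^{t_i-k_i}Q^{\eps+2\Del_{t_i}/k_i}$, and on recalling \eqref{2.3} together with $t_i=k_i\Phi_2$ this becomes $I_i\ll P_i^{t_i}n^{-1}Q^{2\del_i}$ with $\del_i=e^{1-\Phi_2+2/k_i}$. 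One caveat: Lemma~\ref{lemma3.2} was stated under the hypothesis $t_i\ge 4k_i$, i.e.\ $\Phi_2\ge 4$; I would either carry this as a harmless side condition (it is implied by the hypotheses under which this lemma is eventually invoked) or note, as the authors remark after Lemma~\ref{lemma3.2}, that the bound $s\ge 4k$ there can be relaxed. Substituting the estimates for $I_i$ into the H\"older bound produces
\[
\int_{\grN(Q)}|G_2(\alp)|\d\alp \ll G_2(0)n^{-1}Q^{2\Ome_2},\qquad \Ome_2=\sum_{i=k+1}^s\frac{\del_i}{t_i}.
\]

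Finally I would simplify the exponent. Since $k_i$ is increasing in $i$, the index $i=k+1$ gives the largest value of $\del_i=e^{1-\Phi_2+2/k_i}$, namely $\del_{k+1}=e^{1-\Phi_2+2/k_{k+1}}=e^{1-\Phi_2+2/(k(r+1))}$; bounding every $\del_i$ by this common value and using $\sum_{i=k+1}^s t_i^{-1}=1$ yields $\Ome_2\le e^{1-\Phi_2+2/(k(r+1))}=\Tet_2$, which is exactly the claimed bound. I do not anticipate any serious obstacle here: the argument is a routine transcription of Lemma~\ref{lemma4.2}. The only point demanding a moment's care is checking that $k_{k+1}=k+rk=k(r+1)$, so that the ``first'' exponent in the tail product is the smallest one and hence dominates in the exponential; once that identity is in hand the estimate falls out immediately.
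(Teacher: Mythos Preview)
Your proposal is correct and follows the paper's own proof essentially line for line: the paper defines $t_i=k_i\Phi_2$, invokes the argument of Lemma~\ref{lemma4.2} mutatis mutandis to reach $I_i\ll P_i^{t_i}n^{-1}Q^{2\del_i}$ with $\del_i=e^{1-\Phi_2+2/k_i}$, and then bounds $\Ome_2$ by $e^{1-\Phi_2+2/(k(r+1))}=\Tet_2$ using $k_{k+1}=k(r+1)$, exactly as you do. Your flagging of the side condition $t_i\ge 4k_i$ (i.e.\ $\Phi_2\ge 4$) needed for Lemma~\ref{lemma3.2} is a point the paper passes over in silence; as you note, it is implied by the hypotheses of Lemma~\ref{lemma5.3} where this estimate is actually used.
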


\begin{proof} Define the exponents $t_i=k_i\Phi_2$ $(k+1\le i\le s)$. Then by following the 
argument of the proof of Lemma \ref{lemma4.2} mutatis mutandis, we obtain the upper 
bound
\[
\int_{\grN(Q)}|G_2(\alp)|\d\alp \ll \prod_{i=k+1}^s \left( P_i^{t_i}n^{-1}Q^{2\del_i}
\right)^{1/t_i},
\]
where we now write $\del_i=e^{1-\Phi_2+2/k_i}$. Thus we infer that
\[
\int_{\grN(Q)}|G_2(\alp)|\d\alp \ll G_2(0)n^{-1}Q^{2\Ome_2},
\]
where $\Ome_2=e^{1-\Phi_2+2/(k(r+1))}=\Tet_2$, and this delivers the conclusion of the 
lemma.
\end{proof}

We now combine the conclusions of Lemmata \ref{lemma5.1} and \ref{lemma5.2} much as 
in the proof of Lemma \ref{lemma4.3}.

\begin{lemma}\label{lemma5.3}
Suppose that $k$ and $r$ are natural numbers with $r\ge 1$ and $k\ge 2$, and put 
$k_i=k+r(i-1)$ $(1\le i\le s)$. Then, provided that $s\ge A(r)(k+1)^{r+1}$, where 
$A(r)=r^{-1}25^r(r+1)^{r+1}$, there is a positive number $\del$ having the property that
\[
\int_\grk |\mathscr F(\alp)|\d\alp \ll n^{\tet-1}L^{-\del}.
\]
\end{lemma}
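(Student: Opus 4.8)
The plan is to mirror the proof of Lemma \ref{lemma4.3}, substituting the sharper pointwise bound of Lemma \ref{lemma5.1} for Lemma \ref{lemma4.1} and the mean value bound of Lemma \ref{lemma5.2} for Lemma \ref{lemma4.2}. First I would observe that $\grk\subseteq \grm(L^{1/15})$, so it suffices to estimate the integral of $|\mathscr F(\alp)|=|F_2(\alp)G_2(\alp)|$ over the dyadic pieces $\grN(Q)$ with $L^{1/15}\le Q\le n^{1/2}$. On each such piece, combining Lemmata \ref{lemma5.1} and \ref{lemma5.2} gives
\[
\int_{\grN(Q)}|\mathscr F(\alp)|\d\alp \ll F_2(0)G_2(0)n^{-1}Q^{2\Tet_2-1/(Dk(r+1))},
\]
and since $F_2(0)G_2(0)\asymp n^\tet$ this is $\ll n^{\tet-1}Q^{2\Tet_2-1/(Dk(r+1))}$. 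The whole argument then reduces to verifying that the exponent of $Q$ is negative, i.e. that $2\Tet_2<1/(Dk(r+1))$; once this holds one sets $\del=\tfrac{1}{15}\bigl(1/(Dk(r+1))-2\Tet_2\bigr)$ and sums the geometric series over the dyadic dissection exactly as at the end of the proof of Lemma \ref{lemma4.3}.

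The substantive step, and the one I expect to be the main obstacle, is translating the hypothesis $s\ge A(r)(k+1)^{r+1}$ into the inequality $2\Tet_2<1/(Dk(r+1))$. Recalling that $\Tet_2=e^{1-\Phi_2+2/(k(r+1))}$, this is equivalent to
\[
e^{\Phi_2}>2Dk(r+1)e^{1+2/(k(r+1))},
\]
so I need a lower bound for $\Phi_2=\sum_{i=k+1}^s k_i^{-1}$ in terms of $s$. As in Corollary \ref{corollary4.4}, comparison with an integral yields
\[
\Phi_2=\sum_{i=k+1}^s\frac{1}{k+r(i-1)}>\int_k^s\frac{\d t}{k+rt}=\frac{1}{r}\log\biggl(\frac{k+rs}{k+rk}\biggr)=\frac{1}{r}\log\biggl(\frac{k+rs}{k(r+1)}\biggr),
\]
so $e^{\Phi_2}>\bigl((k+rs)/(k(r+1))\bigr)^{1/r}$. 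Thus it suffices to ensure
\[
\frac{k+rs}{k(r+1)}>\bigl(2Dk(r+1)e^{1+2/(k(r+1))}\bigr)^r,
\]
i.e. $k+rs>k(r+1)\bigl(2De^{1+2/(k(r+1))}k(r+1)\bigr)^r$. Here I would use the crude bounds $k(r+1)\le(k+1)(r+1)\le (k+1)^{2}(r+1)$ is wasteful, so rather estimate $k(r+1)\le (k+1)(r+1)$ and $e^{2/(k(r+1))}\le e^{1/(r+1)}\le e$, giving the cleaner sufficient condition $k+rs>k(r+1)\bigl(2De^{2}(k+1)(r+1)\bigr)^r$, whence $rs> (k+1)(r+1)\bigl(2De^2(k+1)(r+1)\bigr)^r$ and therefore
\[
s>\frac{(r+1)^{r+1}}{r}\bigl(2De^2\bigr)^r(k+1)^{r+1}.
\]
A modicum of computation using \eqref{3.12} confirms $2De^2<25$, so the condition is implied by $s\ge A(r)(k+1)^{r+1}$ with $A(r)=r^{-1}25^r(r+1)^{r+1}$, as claimed.

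With the inequality $2\Tet_2<1/(Dk(r+1))$ in hand, the remainder is routine: writing $\del=\tfrac{1}{15}\bigl(1/(Dk(r+1))-2\Tet_2\bigr)>0$, one has $\int_{\grN(Q)}|\mathscr F(\alp)|\d\alp\ll n^{\tet-1}Q^{-15\del}$ for $L^{1/15}\le Q\le n^{1/2}$, and then
\[
\int_\grk|\mathscr F(\alp)|\d\alp\le\sum_{\substack{l\ge 0\\ 2^l\le n^{1/2}L^{-1/15}}}\int_{\grN(2^{-l}n^{1/2})}|\mathscr F(\alp)|\d\alp\ll n^{\tet-1}\sum_{\substack{l\ge 0\\ 2^l\le n^{1/2}L^{-1/15}}}(2^{-l}n^{1/2})^{-15\del}\ll n^{\tet-1}L^{-\del},
\]
which is the asserted bound. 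The only care needed is that $\eta$ has been fixed small enough in terms of $k$, $r$, $s$ and $\eps$ for all the admissible-exponent estimates invoked via Lemmata \ref{lemma3.2} and \ref{lemma3.4} to hold simultaneously, which is guaranteed by the conventions set out in \S2 and \S3.
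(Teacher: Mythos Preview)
Your overall strategy is exactly that of the paper, and the reduction to the inequality $2\Tet_2<1/(Dk(r+1))$ together with the integral lower bound for $\Phi_2$ are both correct. The gap lies in your numerical treatment of the factor $e^{2/(k(r+1))}$. You assert that $2De^2<25$, but with $D=4.51396$ and $e^2\approx 7.389$ one computes $2De^2\approx 66.7$, which is far larger than $25$. Consequently your sufficient condition $s>r^{-1}(2De^2)^r(r+1)^{r+1}(k+1)^{r+1}$ is \emph{not} implied by the hypothesis $s\ge A(r)(k+1)^{r+1}=r^{-1}25^r(r+1)^{r+1}(k+1)^{r+1}$, and the argument collapses at this point.

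The fix, which is what the paper does, is not to bound $e^{2/(k(r+1))}$ crudely by $e$ but rather to observe that for $r\ge 1$ and $k\ge 2$ one has
\[
\frac{2r}{k(r+1)^2}\le \frac{1}{2k}<\frac{1}{k}-\frac{1}{2k^2}<\log\Bigl(1+\frac{1}{k}\Bigr),
\]
whence $e^{2r/(k(r+1))}<(1+1/k)^{r+1}$. Substituting this into the requirement $k+rs>k(r+1)(2eDk(r+1))^r e^{2r/(k(r+1))}$ yields
\[
k+rs>(2eD)^r(r+1)^{r+1}k^{r+1}(1+1/k)^{r+1}=(2eD)^r(r+1)^{r+1}(k+1)^{r+1},
\]
and now the relevant constant is $2eD\approx 24.54<25$, so the hypothesis $s\ge A(r)(k+1)^{r+1}$ suffices. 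The point is that the exponential factor must be absorbed into the passage from $k^{r+1}$ to $(k+1)^{r+1}$ rather than estimated independently; your crude bound loses exactly this saving.
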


\begin{proof} We again have $\grk\subset \grm(L^{1/15})$. When 
$L^{1/15}\le Q\le n^{1/2}$, we find from Lemmata \ref{lemma5.1} and \ref{lemma5.2} 
that
\begin{align}
\int_{\grN(Q)}|F_2(\alp)G_2(\alp)|\d\alp &\le \Bigl( \sup_{\alp \in \grN(Q)}|F_2(\alp)|
\Bigr) \int_{\grN(Q)}|G_2(\alp)|\d\alp \notag\\
&\ll F_2(0)Q^{-1/(Dk(r+1))}G_2(0)n^{-1}Q^{2\Tet_2}.\label{5.4}
\end{align}
On recalling \eqref{5.3}, we see that
\[
\Phi_2=\sum_{i=k+1}^s\frac{1}{k+r(i-1)}>\int_k^s\frac{\d t}{k+rt}=\frac{1}{r}\log 
\biggl( \frac{k+rs}{k(r+1)}\biggr) ,
\]
and hence
\[
\Tet_2<e^{1+2/(k(r+1))}\biggl( \frac{k(r+1)}{k+rs}\biggr)^{1/r}.
\]
Therefore, provided that
\begin{equation}\label{5.5}
2e^{1+2/(k(r+1))}\biggl( \frac{k(r+1)}{k+rs}\biggr)^{1/r}<\frac{1}{Dk(r+1)},
\end{equation}
then it follows from \eqref{5.4} that there is a positive number $\del$ having the property 
that
\begin{equation}\label{5.6}
\int_{\grN(Q)}|F_2(\alp)G_2(\alp)|\d\alp \ll F_2(0)G_2(0)n^{-1}Q^{-15\del}.
\end{equation}

\par We now set about establishing the inequality \eqref{5.5}. Observe first that since 
$r\ge 1$, one has
\[
\frac{2r}{k(r+1)^2}\le \frac{1}{2k}<\frac{1}{k}-\frac{1}{2k^2}<
\log \Bigl( 1+\frac{1}{k}\Bigr) ,
\]
and thus
\[
e^{2r/(k(r+1))}<(1+1/k)^{r+1}.
\]
We consequently infer that \eqref{5.5} holds whenever
\[
k+rs>k(r+1)(2eDk(r+1))^r (1+1/k)^{r+1}.
\]
On recalling \eqref{3.12}, we find that $2eD<25$, and thus it follows that \eqref{5.5} 
holds whenever
\[
s\ge r^{-1}25^r(r+1)^{r+1}(k+1)^{r+1}=A(r)(k+1)^{r+1}.
\]
Since this lower bound is one of the hypotheses of the lemma, we may henceforth work 
under the assumption that the upper bound \eqref{5.6} holds.\par

On recalling \eqref{2.5} and \eqref{2.1}, we find that \eqref{5.6} yields the bound
\[
\int_{\grN(Q)}|\mathscr F(\alp)|\d\alp \ll n^{\tet-1}Q^{-15\del}.
\]
A comparison with the concluding part of the argument of the proof of Lemma 
\ref{lemma4.3}, using a dyadic dissection of $\grk$ into subsets of the shape $\grN(Q)$, 
therefore leads us from here to the conclusion of the lemma. 
\end{proof}

\section{The major arc contribution} The goal of this section is to make progress on 
establishing the lower bound \eqref{2.8} for the contribution of the major arcs to 
$T(n;\eta)$. Throughout this section and the next, we work under the hypothesis that
\begin{equation}\label{6.1}
\tet=\sum_{i=1}^s\frac{1}{k_i}>2.
\end{equation}
The hypotheses available to us in Theorem \ref{theorem1.1} ensure that $\tet>3$, 
thereby confirming \eqref{6.1} with room to spare. In the first conclusion of Theorem 
\ref{theorem1.3}, meanwhile, we have in particular $s>25^r(k+1)$, and thus
\begin{align*}
\tet&=\sum_{i=1}^s\frac{1}{k+r(i-1)}>\int_0^s\frac{\d t}{k+rt}=\frac{1}{r}\log 
\biggl( \frac{k+rs}{k}\biggr) \\
&>\frac{1}{r}\log (25^r)=\log 25>2,
\end{align*}
and the hypothesis \eqref{6.1} again holds. On the other hand, in the second conclusion 
of Theorem \ref{theorem1.3} one has $r\ge k$ and $s\ge (6k+6)^{2r}$, whence a similar 
argument yields
\[
\tet>\frac{1}{r}\log \left( (6k+6)^r\right)=\log (6k+6)\ge \log 18>2,
\]
and \eqref{6.1} holds once again. The upshot of this discussion is that we are cleared in all 
circumstances to work henceforth under the assumption that \eqref{6.1} holds.\par

Suppose next that $\alp\in \grK(q,a)\subseteq \grK$. The standard theory of smooth Weyl 
sums (see \cite[Lemma 5.4]{Vau1989}) shows that there is a positive number $c=c(\eta)$ 
such that for $1\le i\le s$, one has
\[
f_i(\alp)=cq^{-1}S_i(q,a)v_i(\alp-a/q)+O(P_iL^{-1/4}),
\]
wherein
\[
S_i(q,a)=\sum_{t=1}^qe(at^{k_i}/q)\quad \text{and}\quad v_i(\bet)=\frac{1}{k_i}
\sum_{m\le n}m^{-1+1/k_i}e(\bet m).
\]
Put
\begin{equation}\label{6.2}
\grJ(n;X)=\int_{-X/n}^{X/n}v_1(\bet)v_2(\bet)\cdots v_s(\bet)e(-\bet n)\d\bet .
\end{equation}
Also, write
\[
\grS(n;X)=\sum_{1\le q\le X}q^{-s}U_n(q),
\]
where
\begin{equation}\label{6.4}
U_n(q)=\sum_{\substack{a=1\\ (a,q)=1}}^q S_1(q,a)S_2(q,a)\cdots S_s(q,a)e(-na/q).
\end{equation}
Then since $\grK$ has measure $O(L^{1/5}n^{-1})$, we see that
\begin{equation}\label{6.5}
\int_\grK \mathscr F(\alp)e(-n\alp)\d\alp =c^s\grJ(n;L^{1/15})\grS(n;L^{1/15})
+O(P_1P_2\cdots P_sn^{-1}L^{-1/20}).
\end{equation}

\par We show in the next section that the singular series
\begin{equation}\label{6.6}
\grS(n)=\sum_{q=1}^\infty q^{-s}U_n(q).
\end{equation}
converges absolutely and uniformly for $n\in \dbN$, and moreover that $\grS(n)\gg 1$ 
whenever $s\ge 4k_1$ and the condition \eqref{6.1} holds. Moreover, under the latter 
condition we show further that there is a positive number $\del$ such that
\begin{equation}\label{6.7}
\grS(n)-\grS(n;X)\ll X^{-\del}.
\end{equation}

\par Next, on making use of the bound supplied by \cite[Lemma 2.8]{Vau1997}, one finds 
that
\[
v_i(\bet)\ll P_i(1+n\| \bet\|)^{-1/k_i}\quad (1\le i\le s).
\]
Hence, working under the hypothesis \eqref{6.1}, we deduce from \eqref{6.2} that there is 
a positive number $\del$ such that
\begin{equation}\label{6.8}
\grJ(n;X)=\grJ(n)+O(P_1P_2\cdots P_sn^{-1}X^{-\del}),
\end{equation}
where
\[
\grJ(n)=\int_{-1/2}^{1/2}v_1(\bet)v_2(\bet)\cdots v_s(\bet)e(-\bet n)\d\bet .
\]
A familiar approach paralleling that of \cite[Theorem 2.3]{Vau1997} shows that
\begin{equation}\label{6.9}
\grJ(n)=\frac{\Gam\bigl( 1+\frac{1}{k_1}\bigr)\Gam\bigl( 1+\frac{1}{k_2}\bigr) \cdots 
\Gam\bigl( 1+\frac{1}{k_s}\bigr) }{\Gam \bigl( \frac{1}{k_1}+\frac{1}{k_2}+\ldots 
+\frac{1}{k_s}\bigr) }n^{\tet-1}\left( 1+O(n^{-1/k_s})\right) .
\end{equation}
Thus, on combining \eqref{6.5} with \eqref{6.7}, \eqref{6.8} and \eqref{6.9}, we conclude 
that there is a positive number $\del$ for which
\begin{equation}\label{6.10}
\int_\grK \mathscr F(\alp)e(-n\alp)\d\alp =c^s\Gam(\tet)^{-1}\biggl( 
\prod_{i=1}^s\Gam\Bigl( 1+\frac{1}{k_i}\Bigr) \biggr) 
\grS(n)n^{\tet-1}+O(n^{\tet-1}L^{-\del}).
\end{equation}

\par Subject to our verification in the next section that the lower bound $\grS(n)\gg 1$ holds 
uniformly in $n$, we conclude from \eqref{6.10} that the lower bound \eqref{2.8} holds. In 
combination with the minor arc estimate \eqref{2.7}, available from Lemma \ref{lemma4.3} 
under the hypotheses of Theorem \ref{theorem1.1}, we conclude that
\begin{equation}\label{6.11}
T(n;\eta)=\int_\grK \mathscr F(\alp)e(-n\alp)\d\alp +\int_\grk \mathscr F(\alp)e(-n\alp)
\d\alp \gg n^{\tet-1}.
\end{equation}
This completes the proof of Theorem \ref{theorem1.1}. In order to establish Theorem 
\ref{theorem1.3}, we observe on the one hand that the upper bound \eqref{2.7} follows 
from Lemma \ref{lemma5.3} when $s\ge A(r)(k+1)^{r+1}$. Also, when $r\ge k$ and 
$s\ge (6k+6)^{2r}$, the upper bound \eqref{2.7} follows from Corollary \ref{corollary4.4}. 
Thus, in either case, we find as before that \eqref{6.11} follows in these respective 
situations, and thus the proof of Theorem \ref{theorem1.3} is now complete.

\section{The singular series} In this section we estimate the singular series, confirming 
\eqref{6.7} and the bounds $1\ll \grS(n)\ll 1$. Our argument parallels the analogous 
treatment of \cite{Sco1960}, though we introduce refinements en route. We continue 
working under the hypothesis \eqref{6.1} throughout.\par

First, from \eqref{6.4} and \cite[Theorem 4.2]{Vau1997}, we see that the bound
\[
q^{-s}U_n(q)\ll q^{1-1/k_1-1/k_2-\ldots -1/k_s}=q^{1-\tet}
\]
holds uniformly in $n$. Thus, in view of \eqref{6.1}, there is a positive number $\del$ for 
which $q^{-s}U_n(q)\ll q^{-1-\del}$. It follows that the singular series $\grS(n)$ defined in 
\eqref{6.6} converges absolutely and uniformly in $n$, and moreover one has the bound 
\eqref{6.7}. Next, by following the argument underlying the proof of 
\cite[Lemma 2.11]{Vau1997}, we 
see that $U_n(q)$ is a multiplicative function of $q$. In view of \eqref{6.6}, we may rewrite 
$\grS(n)$ in the form $\grS(n)=\prod_p\chi_p(n)$, where the product is over all prime 
numbers $p$, and
\begin{equation}\label{7.1}
\chi_p(n)=\sum_{\nu=0}^\infty p^{-s\nu}U_n(p^\nu).
\end{equation}
By orthogonality, this Euler factor is related to the number $M_n(p^\nu)$ of incongruent 
solutions of the congruence
\[
x_1^{k_1}+x_2^{k_2}+\ldots +x_s^{k_s}\equiv n\mmod{p^\nu},
\]
via the relation
\begin{equation}\label{7.2}
\chi_p(n)=\lim_{\nu\rightarrow \infty}p^{(1-s)\nu}M_n(p^\nu).
\end{equation}
A model for the necessary argument, which is standard, may be found in the discussion 
associated with \cite[Lemma 2.12]{Vau1997}. The limit \eqref{7.2} is seen to exist via the 
relation \eqref{7.1}. In particular, the quantity $\chi_p(n)$ is a non-negative number 
satisfying the relation $\chi_p(n)=1+O(p^{-1-\del})$.\par

We summarise our deliberations thus far in the form of a lemma.

\begin{lemma}\label{lemma7.1}
Suppose that \eqref{6.1} holds. Then the series \eqref{6.6} converges absolutely, and there 
exists a natural number $C$ with the property that for all integers $n$, one has
\[
\grS(n)\ge \tfrac{1}{2} \prod_{p\le C}\chi_p(n).
\]
\end{lemma}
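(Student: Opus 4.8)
The plan is to prove Lemma \ref{lemma7.1} by combining the absolute convergence of the singular series --- which has already been established via the uniform bound $q^{-s}U_n(q)\ll q^{-1-\del}$ stemming from \eqref{6.1} --- with a tail estimate that uniformly controls the contribution of the large primes to the Euler product $\grS(n)=\prod_p\chi_p(n)$. First I would record that, since $\chi_p(n)=1+O(p^{-1-\del})$ with an implied constant independent of both $p$ and $n$, there is a natural number $C$, depending only on $\bfk$ and $s$, such that $|\chi_p(n)-1|\le \tfrac{1}{2}\cdot 2^{-(p-C)}$, say, or more crudely such that $\sum_{p>C}|\chi_p(n)-1|\le \log 2$, for every $n$. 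This is a routine consequence of the convergence of $\sum_p p^{-1-\del}$.

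With such a $C$ in hand, the argument is essentially the standard splitting of an Euler product into its small-prime part and its tail. I would write
\[
\grS(n)=\biggl(\prod_{p\le C}\chi_p(n)\biggr)\prod_{p>C}\chi_p(n),
\]
noting that this factorisation is legitimate because the product converges absolutely and every factor $\chi_p(n)$ is non-negative, as recorded in the paragraph preceding the lemma. For the tail product over $p>C$, the inequality $1+x\ge e^{x}$ for $x\le 0$ together with $\prod(1+x_p)\ge \exp\bigl(-\sum |x_p|\bigr)$ when the $x_p$ are small --- here one may use $\log(1+x)\ge -2|x|$ for $|x|\le \tfrac12$, valid once $C$ is large enough --- gives
\[
\prod_{p>C}\chi_p(n)\ge \exp\Bigl(-2\sum_{p>C}|\chi_p(n)-1|\Bigr)\ge \exp(-2\log 2)\ge \tfrac12,
\]
after adjusting $C$ so that the tail sum is at most $\tfrac14\log 2$; one simply enlarges $C$ to absorb the constant. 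Since every $\chi_p(n)\ge 0$, the small-prime factor $\prod_{p\le C}\chi_p(n)$ is non-negative, and multiplying through yields $\grS(n)\ge \tfrac12\prod_{p\le C}\chi_p(n)$, which is exactly the claimed bound.

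The only point requiring a modicum of care --- and the closest thing to an obstacle --- is ensuring that the implied constant in $\chi_p(n)=1+O(p^{-1-\del})$ is genuinely uniform in $n$, so that the threshold $C$ can be chosen once and for all. This uniformity has in fact already been secured in the discussion above the lemma: the bound $q^{-s}U_n(q)\ll q^{-1-\del}$ is uniform in $n$ by \cite[Theorem 4.2]{Vau1997}, hence so is the estimate $|\chi_p(n)-1|\le\sum_{\nu\ge1}p^{-s\nu}|U_n(p^\nu)|\ll \sum_{\nu\ge1}p^{-\nu(1+\del)}\ll p^{-1-\del}$ obtained from \eqref{7.1}. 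Everything else is the elementary manipulation of a convergent non-negative Euler product sketched above, so the proof is short. I would also remark in passing that $C$ depends only on $\bfk$ and $s$, not on $n$, which is what makes the lemma useful for the subsequent local solubility analysis.
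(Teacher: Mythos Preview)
Your approach is exactly that of the paper: the discussion preceding the lemma establishes $q^{-s}U_n(q)\ll q^{-1-\del}$ uniformly in $n$, the Euler product factorisation, the non-negativity of $\chi_p(n)$, and the uniform bound $\chi_p(n)=1+O(p^{-1-\del})$, and the lemma is then recorded as a summary of these facts, the tail estimate $\prod_{p>C}\chi_p(n)\ge \tfrac12$ being left implicit. You have simply made that last step explicit, which is fine.

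Two small slips to clean up: the inequality $1+x\ge e^x$ for $x\le 0$ is false (the reverse holds), though you do not actually use it; and the displayed chain $\exp(-2\log 2)\ge \tfrac12$ is numerically wrong ($\exp(-2\log 2)=\tfrac14$). Your own subsequent sentence fixes this by taking the tail sum $\le \tfrac14\log 2$, giving $\exp(-\tfrac12\log 2)=2^{-1/2}\ge \tfrac12$; just make the display match.
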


We have yet to obtain a lower bound for $\chi_p(n)$ when $p\le C$, a matter to which we 
now attend. Put $D=(k_1,k_2,\ldots ,k_s)$, the greatest common divisor of 
$k_1,k_2,\ldots ,k_s$. Define the non-negative integer $\lam$ by means of the relation 
$p^\lam\| D$. Then we have $p^\lam|k_i$ for $1\le i\le s$, and there exists an index $j$ 
with $1\le j\le s$ for which $p^\lam\| k_j$. We show that for each integer $n$, there is a 
solution of the congruence
\begin{equation}\label{7.3}
x_1^{k_1}+x_2^{k_2}+\ldots +x_s^{k_s}\equiv n\mmod{p^{\lam+\tau}},
\end{equation}
with $\tau=1$ for odd $p$, and with $\tau=2$ for $p=2$, in each case with $(x_j,p)=1$. 

\par In order to establish this last assertion, suppose temporarily that there is an integer 
$n$ having the property that \eqref{7.3} has no solution with $(x_j,p)=1$. It then follows 
that the range of the left hand side of \eqref{7.3} modulo $p^{\lam+\tau}$, with 
$(x_j,p)=1$, has at most $p^{\lam+\tau}-1$ elements. In the first instance we assume 
that $p$ is odd. Then, the theory of power residues shows that the monomial $x^{k_j}$ 
takes $(p-1)/(p-1,k_j)$ values modulo $p^{\lam+1}$ as $x$ varies over 
$1\le x\le p^{\lam+1}$ with $(x,p)=1$. Furthermore, for any index $i$ we see that 
$y^{k_i}$ takes at least $1+(p-1)/(p-1,k_i)$ values modulo $p^{\lam+1}$ as $y$ 
varies over $1\le y\le p^{\lam+1}$. We now repeatedly apply the Cauchy-Davenport 
theorem (see \cite[Lemma 2.14]{Vau1997}), beginning with the values of $x_j^{k_j}$, and 
then adding in the remaining powers step-by-step. On recalling \eqref{6.1}, we find that 
with $p\nmid x_j$, the range of the left hand side of \eqref{7.3}, modulo $p^{\lam+1}$, 
contains a number of elements which is at least
\[
\sum_{i=1}^s\frac{p-1}{(p-1,k_ip^{-\lam})}\ge p^\lam (p-1)
\sum_{i=1}^s\frac{1}{k_i}>2p^\lam(p-1).
\]
This yields a contradiction, since $2p^\lam (p-1)\ge p^{\lam+1}$. Our claim concerning the 
solubility of the congruence \eqref{7.3} is consequently confirmed when $p$ is odd.\par

We next consider the situation with $p=2$, where $\tau=2$. For some index $j$ with 
$1\le j\le s$, one has $2^\lam\| k_j$. In \eqref{7.3} we take $x_j=1$. We can solve 
\eqref{7.3} with $x_i\in \{0,1\}$ ($1\le i\le s$ and $i\ne j$) provided that 
$s\ge 2^{\lam+2}$. However, we have $k_1\ge 2^\lam$, and hence the condition that 
$s\ge 4k_1$ suffices to confirm our claim concerning the solubility of the congruence 
\eqref{7.3} in the case that $p=2$.\par

A routine argument now bounds $M_n(p^\nu)$ from below. We observe that since 
$p^\lam\|k_j$, a number coprime to $p$ is a $k_j$-th power residue modulo 
$p^{\lam+\tau}$ if and only if it is a $k_j$-th power residue modulo $p^\nu$, for all 
$\nu\ge \lam+\tau$. Let $x_1,x_2,\ldots ,x_s$ be a solution of \eqref{7.3}, with 
$(x_j,p)=1$, and let $\nu$ be a natural number with $\nu\ge \lam+\tau$. There are 
$p^{\nu-\lam-\tau}$ choices for $y_i$ with $y_i\equiv x_i\mmod{p^{\lam+\tau}}$ and 
$1\le y_i\le p^\nu$. For each such choice with $1\le i\le s$ and $i\ne j$, the integer
\[
n-\sum_{\substack{i=1\\ i\ne j}}^sx_i^{k_i}
\]
is a $k_j$-th power residue modulo $p^{\lam+\tau}$, and therefore a $k_j$-th power 
residue modulo $p^\nu$. Thus, we have $M_n(p^\nu)\ge p^{(s-1)(\nu -\lam-\tau)}$, so by 
\eqref{7.2} we see that $\chi_p(n)\ge p^{-(\lam+\tau)(s-1)}$. This lower bound holds for 
all primes $p$ with $p^\lam\| D$ and all $n\in \dbN$ provided that $s\ge 4k_1$ and 
\eqref{6.1} holds.\par

We summarise these deliberations in the following lemma.

\begin{lemma}\label{lemma7.2}
Suppose that $s\ge 4k_1$, and \eqref{6.1} holds. Then there is a positive number $\ome$ 
having the property that $\grS(n)\ge \ome$ for all $n\in \dbN$. 
\end{lemma}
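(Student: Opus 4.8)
The plan is to assemble the pieces already established in this section into the desired uniform lower bound for $\grS(n)$. By Lemma \ref{lemma7.1}, we have $\grS(n)\ge \tfrac{1}{2}\prod_{p\le C}\chi_p(n)$, so it suffices to bound $\chi_p(n)$ from below, uniformly in $n$, for each of the finitely many primes $p\le C$. The running argument in the body of the section—based on the theory of power residues, the Cauchy--Davenport theorem for odd $p$, and an explicit choice of representatives when $p=2$—already shows that the congruence \eqref{7.3} is soluble with $(x_j,p)=1$ under the hypotheses $s\ge 4k_1$ and \eqref{6.1}, and hence that $M_n(p^\nu)\ge p^{(s-1)(\nu-\lam-\tau)}$ for all $\nu\ge \lam+\tau$. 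Feeding this into the limit formula \eqref{7.2} yields $\chi_p(n)\ge p^{-(\lam+\tau)(s-1)}>0$, where $\lam$ is the exponent with $p^\lam\|D$ and $\tau\in\{1,2\}$.

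The key observation that makes the product over $p\le C$ bounded away from zero is that $\lam>0$ occurs for only finitely many primes $p$, since $D=(k_1,\ldots,k_s)$ is a fixed positive integer once $\bfk$ and $s$ are fixed: indeed $\lam=0$ for every prime $p\nmid D$, in which case $\tau$ is absolute and the bound $\chi_p(n)\ge p^{-\tau(s-1)}$ is a fixed positive constant depending only on $p$ and $s$. Therefore I would set
\[
\ome=\tfrac{1}{2}\prod_{p\le C}p^{-(\lam_p+\tau_p)(s-1)},
\]
where for each prime $p$ we write $p^{\lam_p}\|D$ and take $\tau_p=1$ for $p$ odd, $\tau_p=2$ for $p=2$. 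This is a finite product of positive numbers and hence a positive constant, depending only on $\bfk$ and $s$ but not on $n$. Combining with Lemma \ref{lemma7.1} gives $\grS(n)\ge \tfrac12\prod_{p\le C}\chi_p(n)\ge \ome$ for every $n\in\dbN$, which is the assertion of the lemma. The notational clash between the exponent $\ome$ of the lemma and the exponent $\ome(k)$ of \S3 is harmless, as the latter plays no role here.

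The only genuine point requiring care—and the step I expect to be the main obstacle—is verifying that the hypotheses actually licence the solubility argument at \emph{every} relevant prime, including the small primes dividing $D$ and the prime $2$. For odd $p$ this rests on the inequality $\sum_i k_i^{-1}>2$, which is precisely \eqref{6.1}, together with the elementary fact that $2p^\lam(p-1)\ge p^{\lam+1}$; both are in hand. For $p=2$ one needs $s\ge 2^{\lam+2}$, and here one invokes $k_1\ge 2^\lam$ (immediate from $2^\lam\mid k_1$ and $k_1\ge 1$) to see that the standing hypothesis $s\ge 4k_1$ suffices. Since all of this has been carried out in the preceding discussion, the proof of the lemma is essentially a matter of recording that the per-prime lower bounds multiply to a positive constant, and I would present it briefly in that spirit.

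\begin{proof}
By Lemma \ref{lemma7.1}, there is a natural number $C$ such that $\grS(n)\ge \tfrac12\prod_{p\le C}\chi_p(n)$ for all $n$. For each prime $p$, let $\lam_p$ be the non-negative integer with $p^{\lam_p}\|D$, where $D=(k_1,\ldots,k_s)$, and put $\tau_p=1$ for odd $p$ and $\tau_2=2$. The discussion preceding this lemma shows, under the hypotheses $s\ge 4k_1$ and \eqref{6.1}, that for every $n\in\dbN$ and every prime $p$ one has $M_n(p^\nu)\ge p^{(s-1)(\nu-\lam_p-\tau_p)}$ for all $\nu\ge \lam_p+\tau_p$, and hence, via \eqref{7.2}, that
\[
\chi_p(n)\ge p^{-(\lam_p+\tau_p)(s-1)}.
\]
Since $D$ is a fixed positive integer, one has $\lam_p=0$ for all but finitely many primes $p$, so the product
\[
\ome=\tfrac12\prod_{p\le C}p^{-(\lam_p+\tau_p)(s-1)}
\]
is a finite product of positive numbers, whence $\ome>0$. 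Combining the displayed lower bounds with Lemma \ref{lemma7.1} gives $\grS(n)\ge \ome$ for all $n\in\dbN$, as claimed.
\end{proof}
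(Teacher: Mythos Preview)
Your proof is correct and follows essentially the same approach as the paper: it invokes Lemma \ref{lemma7.1} and the per-prime lower bound $\chi_p(n)\ge p^{-(\lam_p+\tau_p)(s-1)}$ derived in the preceding discussion, then multiplies over the finitely many primes $p\le C$. The remark that $\lam_p=0$ for all but finitely many primes is superfluous, since the product over $p\le C$ is already finite and each factor is positive regardless of the value of $\lam_p$, but this does no harm.
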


This lemma completes our analysis of the singular series, and thus we have confirmed all of 
the properties that were needed to complete the analysis of \S6. It is worth noting that the 
condition $s\ge 4k_1$ of Lemma \ref{lemma7.2} is automatically satisfied whenever the 
hypotheses of Theorem \ref{theorem1.1} hold for the exponents $k_1,k_2,\ldots ,k_s$. In 
order to verify this claim, observe that
\[
\sum_{i=3}^s\frac{1}{k_3}\le \frac{s}{k_1}
\]
whilst
\[
2\log k_1+\frac{1}{k_2}+3.20032>2\log 2+3>4.
\]
Thus the hypotheses of Theorem \ref{theorem1.1} can be satisfied only when $s>4k_1$.

\bibliographystyle{amsbracket}
\providecommand{\bysame}{\leavevmode\hbox to3em{\hrulefill}\thinspace}

\end{document}